\global\let\@enddocumenthook\@empty
\numberwithin{equation}{subsection}
\newcommand{\GG}{\mathbb{G}}
\def\Om{{\Omega}}
\def\N{\mathbb{N}}
\def\B{\mathbb{B}}
\def\Del{{\Delta}}
\newcommand{\HSwarrow}{\kern0.05ex\vcenter{\hbox{\Huge\ensuremath{\Swarrow}}}\kern0.05ex}
\newcommand{\hSwarrow}{\kern0.05ex\vcenter{\hbox{\huge\ensuremath{\Swarrow}}}\kern0.05ex}
\newcommand{\LLSwarrow}{\kern0.05ex\vcenter{\hbox{\LARGE\ensuremath{\Swarrow}}}\kern0.05ex}
\newcommand{\LSwarrow}{\kern0.05ex\vcenter{\hbox{\Large\ensuremath{\Swarrow}}}\kern0.05ex}
\newcommand{\HSearrow}{\kern0.05ex\vcenter{\hbox{\Huge\ensuremath{\Searrow}}}\kern0.05ex}
\newcommand{\hSearrow}{\kern0.05ex\vcenter{\hbox{\huge\ensuremath{\Searrow}}}\kern0.05ex}
\newcommand{\LLSearrow}{\kern0.05ex\vcenter{\hbox{\LARGE\ensuremath{\Searrow}}}\kern0.05ex}
\newcommand{\LSearrow}{\kern0.05ex\vcenter{\hbox{\Large\ensuremath{\Searrow}}}\kern0.05ex}
\newcommand{\HDownarrow}{\kern0.05ex\vcenter{\hbox{\Huge\ensuremath{\Downarrow}}}\kern0.05ex}
\newcommand{\hDownarrow}{\kern0.05ex\vcenter{\hbox{\huge\ensuremath{\Downarrow}}}\kern0.05ex}
\newcommand{\LLDownarrow}{\kern0.05ex\vcenter{\hbox{\LARGE\ensuremath{\Downarrow}}}\kern0.05ex}
\newcommand{\LDownarrow}{\kern0.05ex\vcenter{\hbox{\Large\ensuremath{\Downarrow}}}\kern0.05ex}
\newcommand{\HUparrow}{\kern0.05ex\vcenter{\hbox{\Huge\ensuremath{\Uparrow}}}\kern0.05ex}
\newcommand{\hUparrow}{\kern0.05ex\vcenter{\hbox{\huge\ensuremath{\Uparrow}}}\kern0.05ex}
\newcommand{\LLUparrow}{\kern0.05ex\vcenter{\hbox{\LARGE\ensuremath{\Uparrow}}}\kern0.05ex}
\newcommand{\LUparrow}{\kern0.05ex\vcenter{\hbox{\Large\ensuremath{\Uparrow}}}\kern0.05ex}
\newtheorem{thm}{Theorem}[section]
\newtheorem{exam}[thm]{Example}
\newtheorem{cor}[thm]{Corollary}
\newtheorem{lem}[thm]{Lemma}
\newtheorem{pro}[thm]{Proposition}
\newtheorem{defn}[thm]{Definition}
\newtheorem{obs}[thm]{Observation}
\newtheorem{rem}[thm]{Remark}
\DeclareMathOperator{\G}{\mathcal{G}}
\DeclareMathOperator{\F}{\mathcal{F}}
\DeclareMathOperator{\Sp}{\mathcal{S}}
\DeclareMathOperator{\holim}{holim}
\DeclareMathOperator{\id}{id}
\DeclareMathOperator{\op}{op}
\DeclareMathOperator{\map}{map}
\DeclareMathOperator{\AdjCat}{AdjCat}
\DeclareMathOperator{\sS}{s\mathcal{S}}
\def\Sim{\Delta}
\newcommand{\tgpd}{\kern0.05ex\vcenter{\hbox{\footnotesize\ensuremath{2}}}\kern0.05ex\mathcal{G}pd} 
\def\rar{\rightarrow}
\def\lrar{\longrightarrow}
\def\llar{\longleftarrow}
\def\thrar{\twoheadrightarrow}
\def\bu{\bullet}
\def\bar{\overline}
\def\cl{\mathcal}
\newcommand\ackname{\textbf{Acknowledgements}}
  \newenvironment{acknowledgements}{
      \titlepage
      \null\vfil
      \@beginparpenalty\@lowpenalty
      \begin{center}
        \bfseries \ackname\
        \@endparpenalty\@M
      \end{center}}
     {\par\vfil\null\endtitlepage}
\title{Segal Group Actions}
\author{Matan Prasma} 
\begin{document}
\maketitle

\begin{abstract}
We define a model category structure on a slice category of simplicial spaces, called the "Segal group action" structure, whose fibrant-cofibrant objects 
may be viewed as representing spaces $X$ with an action of a fixed Segal group (i.e. a group-like, reduced Segal space). We show that this model structure 
is Quillen equivalent to the projective model structure on $G$-spaces, ${\Sp}^{\B G}$, where $G$ is a simplicial group corresponding to the Segal group. 
One advantage of this model is that if we start with an ordinary group action $X\in \Sp^{\B G}$ and apply a weakly monoidal functor of spaces 
$L:\Sp\lrar \Sp$ (such as localization or completion) on each simplicial degree of its associated Segal group action, we get a new Segal group action of $L G$ on $L X$ which can then be rigidified via the above-mentioned Quillen equivalence. 

\end{abstract}

\tableofcontents

\section{Introduction}

The development and use of homotopy-coherent versions of classical notions is by now widespread in several parts of mathematics. It is often beneficial to augment an "up-to-homotopy" notion with a "rigidification" procedure that compares it back to a classical (often enriched) notion. Such a comparison is useful since one can use results that were proven for classical notions in order to establish properties of their homotopy-coherent counter-parts which are usually harder to manage. Let us demonstrate this by the following example.        
For a simplicial group $G$, the methods of higher category theory enable one to have a flexible model of a group action by simply considering the $\infty$-category of $\infty$-functors $Fun(\N (\mathbb{B} G),\N(\Sp^{o}))$ from the homotopy coherent nerve of $\mathbb{B} G$ to the homotopy coherent nerve of the category of spaces. This $\infty$-category can be thought of as spaces with a "group action" up to coherent homotopy, and moreover, comes with a rigidification functor \cite[Proposition 5.1.1.1]{Lur09} to (ordinary) $G$-spaces $$Fun(\N (\mathbb{B} G),\N(\Sp^o))\lrar \Sp^{\mathbb{B} G}.$$ We can use this rigidification to show that
\begin{exam}
The Moore space construction $M(-,n)$ cannot be lifted to an $\infty$-functor $M(-,n):\N(\cl{A}b)\lrar \N(\Sp^o)$. 
\end{exam}
\begin{proof}
If there was a functor $M(-,n):\cl{A}b\lrar \Sp$, it would induce, for every group $G$, an "equivariant Moore space" functor $$M_G(-,n):\cl{A}b^{\B G}\lrar \Sp^{\B G}$$ but \cite{Car} shows that there are (discrete) groups $G$ (e.g. all non-cyclic groups) for which such a functor cannot exist. Similarly, if there was an $\infty$-functor $$M(-,n):\N(\cl{A}b)\lrar N(\Sp^o),$$ it would induce, for any discrete group $G$, an $\infty$-functor $$Fun\left(\N (\B G),\N(\cl{A}b)\right)\lrar Fun\left(\N(\B G),\N(\Sp^o)\right).$$
But the latter may be rigidified to an ordinary functor $$\cl{A}b^G\lrar \Sp^{\B G}$$ which cannot exist by \cite{Car}.

\end{proof}

The purpose of this work is to provide a point-set model for coherent group actions and to establish a rigidification procedure for them. 
We will provide a model-categorical framework for an existing notion, defined and studied in \cite{Pre} under the name "homotopy action" and which will be referred to here as \textbf{Segal group action}.  
A Segal group action aims to encode a coherent action of a loop space $\Om Y$ (together with its coherent homotopies) on a space $X$. More precisely, such an 'action' is a map of simplicial spaces $\pi:A_\bu\rar B_\bu$ in which $A_0\simeq X$, the codomain $B_\bu$ is a Segal group representing $\Om Y$, i.e $B_\bullet$ is a group-like reduced Segal space with $Y\simeq |B_\bullet|$, and certain `Segal-like' maps $$A_n\lrar A_0\times_{B_0}^h B_n$$ are weak equivalences. 

The definition we give here for a Segal group action is simpler than \cite[Defintion 5.1]{Pre} and our first concern is to show that these two definitions coincide. Then, given a Segal group $B_\bu$, we shall construct a model category structure on the slice category $\sS_{/B_\bu}$ whose fibrant-cofibrant objects are precisely the Segal group actions. Using the diagonal functor $d^*:\sS\rar \Sp$ we can also consider the canonical model structure on $\Sp_{/d^*B_\bu}$, induced by slicing under the Kan-Quillen model structure. For $B_\bu$ as above, we further show that 
$$\xymatrix{d^*:\sS_{/B_\bu}\ar[r]<1ex> & \Sp_{/d^*B_\bu}:d_*\ar[l]<1ex>_{\perp}}$$ constitutes a Quillen equivalence between these two model structures. By composing with a Quillen equivalence induced by a  "rigidification map" and the Quillen equivalence of \cite{DFK}, the above-mentioned equivalence shows that the Segal group action model structure is Quillen equivalent to the \textbf{projective model structure} on $\Sp^{\B G}$ (where $G$ is a simplicial group satisfying $BG\simeq d^*B_\bu$), in which weak equivalences (resp. fibrations) are the maps whose underlying map of spaces is a weak equivalence (resp. fibration).     

One technical advantage of Segal group actions is their invariance under a weakly monoidal endofunctor of spaces, namely, functors $L:\Sp\rar \Sp$ which preserve weak equivalences, contractible objects and finite products up to equivalence. Key examples of such functors are localization by a map, $p$-completion a la Bousfield-Kan, and the derived mapping space $map_{\Sp}^h(C,-)$. Applying a weakly monoidal functor $L:\Sp\rar \Sp$ on each simplicial degree of a Segal group action $A_\bu\lrar B_\bu$ yields a new Segal group action $LA_\bu\rar LB_\bu$, now thought of as a coherent action $LB_1$ on the space $LA_0$. This invariance property can be applied (see \ref{tower}) to obtain a Postnikov tower for a $G$-space $X$, composed out of the $P_nX$, but viewed as $P_nG$-spaces.

\subsection*{\textbf{Related work}}

This work complements the treatment of the notion of "homotopy action" which was developed in \cite{Pre}. In the meantime, two related works came out. 
The first is the work of \cite{NSS}, which develops, in the context of an $\infty$-topos, what they called "principal $\infty$-bundles". The treatment of Segal group actions here shows that they constitute a model-categorical presentation of principal $\infty$-bundles (see Corollary \ref{c:drop d_0}). 
The second related work was published recently as \cite{ELS}. There the authors develop the notion of an "$A_\infty$-action" in an operadic manner and thus provide a way to model an action of an \textbf{$\infty$-monoid} on a space.
Although the work in \textit{loc. cit.} does not give a model-categorical framework, it does provide a rigidification result which resembles the one in this paper.

\section{Preliminaries}\label{s: preliminaries}
\begin{enumerate}[(a)]

\item 
Throughout, a \textbf{space} will always mean a simplicial set. Let $\Sp$ (resp. $\Sp_0$) be the category of simplicial sets (resp. reduced simplicial sets) and $\sS$ the category of \textbf{simplicial spaces}; we shall denote an object of $\sS$ with values $[n]\mapsto X_n$ by $X_\bullet$. We let $c_*:\Sp\rar \sS$ be the functor which sends a simplicial set to a degree-wise discrete simplicial space. On the other hand, a simplicial set $K$ may also be viewed as a constant simplicial space which has $K$ in each degree; we shall denote the latter by $K$ again. 

\item
The category $\sS$ is a simplicial category; for $X,Y\in \sS$ we let $$map_{\sS}(X,Y)\in \Sp$$ denote the \textbf{mapping space}. It has the property that for a simplicial set $K$, and simplicial spaces $X,Y$, $$map_{\sS}(K\times X,Y)\cong map_{\Sp}(K,map_{\sS}(X,Y))$$ where $map_{\Sp}(-,-)$ is the \textbf{mapping space} of $\Sp$. If $\Sim^n\in \Sp$ is the standard $n$-simplex, then, by the Yoneda lemma for bisimplicial sets, $c_*\Sim^n$ gives rise to the $n$-th space functor in that $$map_{\sS}(c_*\Sim^n,X)\cong X_n.$$ 

\item
The category $\sS$ is also cartesian closed; for $X,Y\in \sS$ there is an internal-hom object $Y^X\in \sS$ with the property $$\sS(X\times Y, Z)\cong \sS(X,Z^Y).$$ A routine check shows that for a space $K$ and a simplicial space $X$, the two possible meanings for $X^K$ coincide. 

\item By a \textbf{model category structure} we mean a bicomplete category satisfying Quillen's axioms \cite{Qui} and having \textbf{functorial factorizations}. 

\item
We let $\sS_{\text{\tiny{Reedy}}}$ denote the Reedy model structure on simplicial spaces (see \cite{Ree}). This makes $\sS$ into a simplicial combinatorial model category, in which a map $X\rar Y$ in $\sS$ is a Reedy fibration if for each $n\geq 1$, $$map_{\sS}(c_*\Sim^n,Y)\lrar map_{\sS}(c_*\Sim^n, X)\times_{map_{\sS}(c_*\partial\Sim^n,X)}map_{\sS}(c_*\partial\Sim^n,Y)$$ is a Kan fibration.

\item
It is well-known that the Reedy and the injective model structures on $\sS$ coincide (see~\cite[IV.3, Theorem $3.8$]{GJ}) so that every object of $\sS_{\text{\tiny{Reedy}}}$ is Reedy cofibrant. On the other hand, every Reedy fibrant object in $\sS$ has a Kan complex in each simplicial degree, with face maps being Kan fibrations.
For a simplicial space $B_\bu\in \sS$, the Reedy model structure $\sS_{\text{\tiny{Reedy}}}$ induces a (simplicial, combinatorial) model structure, denoted $(\sS_{/B_\bu})_{\text{\tiny{Reedy}}}$, of which all objects are cofibrant and the fibrant objects are precisely Reedy fibrations $A_\bu\thrar B_\bu$. If furthermore $B_\bu$ was Reedy fibrant, then the domain $A_\bu$ of such a fibrant object is also Reedy fibrant.

\item
Similarly, for a fixed space $B$, the Kan-Quillen model structure $\Sp_{\text{\tiny{KQ}}}$ induces a (simplicial, combinatorial) model structure on the slice category, denoted $(\Sp_{/B})_{\text{\tiny{KQ}}}$, of which all objects are cofibrant and the fibrant objects are precisely Kan fibrations $A\thrar B$. As before, if $B$ was a Kan complex, it follows that for every fibrant object $A\thrar B$, the domain $A$ is a Kan complex.

\item
The diagonal functor $d^*:\sS\rar \Sp$ (induced by $d:\Sim\rar \Sim\times \Sim$) is part of an adjoint triple $d_! \dashv d^*\dashv d_*$ (left adjoints on the left). The functor $d_*:\Sp\rar \sS$ is given by $d_*(A)_\bu=A^{\Sim^\bu}$ (i.e. $d_*(A)_n=A^{\Sim^n}$) and the functor $d_!:\Sp\rar \sS$ is defined by extending the formula $d_!(\Sim^n)=\Sim^{n,n}$ via colimits (here $\Sim^{n,n}$ is the representable presheaf on $([n],[n])$). These adjunctions are compatible with the simplicial enrichments on $\sS$ and $\Sp$ mentioned above.   

\item \label{rigid}
There is an adjunction (see \cite{Kan})
\begin{equation}\label{loop group}
\xymatrix{\GG:\Sp_0\ar[r]<1ex> & sGp:B\ar[l]<1ex>_{\perp}}
\end{equation}
where $B$ is the \textbf{classifying space} functor (often denoted by $\bar{W}$) and $\GG$ is the Kan loop group. Furthermore, since the pair \ref{loop group} is in fact a Quillen equivalence, all objects in $\Sp_0$ are cofibrant and all objects in $sGp$ are fibrant, it follows that the unit map of this adjunction $K\lrar  B\GG K$ is a weak equivalence. The category $\Sp_0$ is a reflective subcategory of $\Sp$, with the left adjoint of the pair $$\xymatrix{\widehat{(-)}:\Sp\ar[r]<1ex> & \Sp_0:\iota\ar[l]<1ex>_(0.45){\perp}}$$ defined by identifying all the $0$-simplicies. 
For a connected space $K$, the unit map $K\rar \widehat{K}$ is a weak equivalence, and we shall refer to the composite of these equivalences $\rho: K\rar \widehat{K}\rar B\GG \widehat{K}$ as the \textbf{rigidification} map. The counterpart of the rigidification map relates the loop functor $\Omega:=map_*(S^1,-):\Sp_0\lrar \Sp_0$ to the Kan loop group. 

\item\label{i: rigidification}
For every Kan complex $K\in \Sp_0$ one has a weak equivalence $\Om K\overset{\sim}{\rar} \GG K$ of simplicial sets. Thus, we define an \textbf{$\infty$-group} to be a triple $(\G,B\G,\eta)$ where $\G$ is a space, $B\G$ is a pointed connected space and $\eta:\G \overset{\simeq}{\lrar} \Omega B\G$ is a weak equivalence. We will often refer to $\G$ itself as an $\infty$-group when $B\G$ and $\eta$ are clear from the context.

We say that the composite $$\G\overset{\simeq}{\lrar} \Om B\G\overset{\simeq}{\lrar} \Omega\widehat{B\G} \overset{\simeq}{\lrar} \GG \widehat{B\G}$$ \textbf{rigidifies} $\G$ into a simplicial group.

\item 
A \textbf{Segal space} is a Reedy fibrant\footnote[2]{Notice the slight deviation from the original definition in \cite{Seg}} simplicial space $B_\bullet$ such that for each $n\geq 1$ the Segal maps
$$B_n\lrar \holim (B_1\overset{d_0}{\lrar} B_0\overset{d_1}{\llar}  \cdots \overset{d_0}{\lrar} B_0\overset{d_1}{\llar} B_1) \simeq B_1\times_{B_0}B_1\times_{B_0}\cdots \times_{B_0} B_1\;\; (n\;times),$$ (induced by the maps $p_i:[1]\lrar [n]\;\; 0\mapsto i-1,\;\;1\mapsto i\;\;(1\leq i\leq n)$) 

are weak equivalences.

A Segal space $B_\bullet$ is called a \textbf{Segal groupoid} (or: group-like) if the map $$(d_1,d_0):B_2\lrar \holim(B_1\overset{d_0}{\lrar} B_0 \overset{d_0}{\llar} B_1)\simeq B_1\times_{B_0} B_1$$ is a weak equivalence. If furthermore $B_0\simeq *$ we shall say that $B_\bullet$ is a \textbf{Segal group}.

\item\label{i: Segal}
G.~Segal essentially showed~\cite{Seg} that one can present any $\infty$-group $\G$ as a Segal group.
More precisely, he showed that if $B_\bullet$ is a Segal group, the canonical map $B_1\lrar \Om (d^*B_\bullet)$ is a weak equivalence.  
Given an $\infty$-group $\G$, a \textbf{Segal group for $\G$} is a Segal group $B_\bullet$ together with an equivalence $\G\overset{\sim}{\lrar} B_1$. 

\item
A \textbf{homotopy fiber sequence} is a sequence of spaces $X\lrar Y\lrar Z$ having a null-homotopic composite and such that the associated map to the homotopy fiber $X\rar F_h(Y\rar Z)$ is a weak equivalence.

\item
For a simplicial group $G$, we denote by $\B G$ the simplicial groupoid with one object associated to $G$. We can then consider the category of simplicial functors $\Sp^{\B G}$ and we shall refer to an object $X\in \Sp^{\B G}$ as a \textbf{$G$-space}. We shall refer to the projective model structure on the category of $G$-spaces as the \textbf{Borel model structure} and denote it by $\left(\Sp^{\B G}\right)_{\text{\tiny{Borel}}}$. In other words, this model structure has as weak equivalences (resp. fibrations) the $G$-maps $X\rar X'$ which are weak equivalences (resp. fibrations) in $\Sp_{\text{\tiny{KQ}}}$. The cofibrant objects of $(\Sp^{\B G})_{\text{\tiny{Borel}}}$ are precisely the spaces with a free $G$-action. Thus, given $X\in \Sp^{\B G}$, a model for its cofibrant replacement is $X\times EG$ (where $EG:=WG$ is the free contractible $G$-space) and the \textbf{homotopy quotient} $$X//G:=X\times_{G} EG$$ may be viewed as the right derived functor of the quotient $$(-)/G:(\Sp^{\B G})_{\text{\tiny{Borel}}}\lrar (\Sp_{/BG})_{\text{\tiny{KQ}}}.$$  
Every $G$-space $X$ gives rise to the \textbf{Borel} (homotopy) fiber sequence $$X\lrar X//G\lrar BG$$ and conversely, any (homotopy) fiber sequence of the form $$X\lrar A\lrar BG$$ is equivalent to some Borel fibration. More concisely: 
\begin{thm}\cite{DFK}\label{Borel}
There is a Quillen equivalence $$\xymatrix{(-)\times_{BG}{*}:(\Sp_{/BG})_{\text{\tiny{KQ}}}\ar[r]<1ex> & (\Sp^{\B G})_{\text{\tiny{Borel}}}:(-)/G\ar[l]<1ex>_(0.45){\perp}.}$$ 
\end{thm}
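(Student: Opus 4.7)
The plan is to verify the Quillen equivalence in three stages. First, I would make the left adjoint precise: the notation $(-)\times_{BG} *$ stands for pullback along a fibrant model of the point over $BG$, namely the universal bundle $EG \to BG$. Thus $F(A) := A\times_{BG} EG$, equipped with the $G$-action inherited from the second factor. The adjunction $F \dashv (-)/G$ then follows from the universal property of the pullback together with the canonical identification $(A\times_{BG} EG)/G \cong A\times_{BG}(EG/G) = A$: a $G$-equivariant map $A\times_{BG} EG \to X$ uniquely descends to a map $A \to X/G$ over $BG$ by passing to $G$-orbits, and the inverse is produced by pulling back $X \to X/G$ along $A\to X/G$ and identifying with the corresponding pullback over $EG$.

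Second, to see that $F \dashv (-)/G$ is a Quillen pair, I would verify that $F$ preserves cofibrations and trivial cofibrations. Every object of $(\Sp_{/BG})_{\text{\tiny{KQ}}}$ is cofibrant, cofibrations there are monomorphisms, and pullback preserves monomorphisms, so $F$ preserves cofibrations. Moreover, since $EG \to BG$ is a Kan fibration and $\Sp_{\text{\tiny{KQ}}}$ is right proper, pullback along $EG\to BG$ preserves weak equivalences; hence $F$ sends trivial cofibrations to maps that are monomorphisms and underlying weak equivalences, i.e.\ trivial cofibrations in the Borel structure.

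Third, and most importantly, I would verify that the derived unit and counit are weak equivalences. Since every $A\to BG$ is cofibrant, $F(A) = A\times_{BG} EG$ is a free $G$-space, hence already cofibrant in the Borel structure, and the derived unit is simply the isomorphism $A \overset{\cong}{\lrar} (A\times_{BG} EG)/G \cong A$. For the derived counit at a fibrant $G$-space $X$, one uses the cofibrant replacement $X\times EG \to X$, so that $(-)/G$ produces $X\times_G EG$, and I would then compute the pullback
$$(X\times_G EG)\times_{BG} EG \;\cong\; X\times EG$$
with the diagonal $G$-action. Under this identification the counit becomes the first projection $X\times EG \to X$, whose underlying map of simplicial sets is a weak equivalence because $EG$ is contractible; since Borel weak equivalences are detected on underlying simplicial sets, we conclude.

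The technical heart that I expect to be the main obstacle is the $G$-equivariant identification $(X\times_G EG)\times_{BG} EG \cong X\times EG$ with the diagonal action. This requires using freeness of the $G$-action on $EG$ to parametrize the pullback by $X\times EG$ via $(x,e)\mapsto ([x,e],e)$, and then carefully tracing through the induced action to see that it is the diagonal one. Everything else follows either formally (from the adjunction and pullback identities) or from the fact that weak equivalences in the Borel structure are detected after forgetting the $G$-action.
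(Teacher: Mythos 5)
The paper does not actually prove this statement --- it is quoted from \cite{DFK} without proof --- so the only question is whether your argument stands on its own. Its overall architecture (identify the adjunction, check that the left adjoint preserves (trivial) cofibrations, compute derived unit and counit using the principal-bundle identity $(X\times_G EG)\times_{BG}EG\cong X\times EG$) is the standard route and the computational steps you carry out are correct. But there is a genuine gap at the very first stage: the functor you take as the right adjoint does not typecheck. For a general $G$-space $X$ the strict quotient $X/G$ carries no natural map to $BG$, so ``a map $A\to X/G$ over $BG$'' is not meaningful, and the claimed bijection $\Hom_G(A\times_{BG}EG,X)\cong\Hom_{/BG}(A,X/G)$ fails even set-theoretically (take $A=\ast$ the basepoint: the left side is $\Hom_G(G,X)\cong X_0$ while the right side would be $(X/G)_0$). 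The honest right adjoint of $A\mapsto A\times_{BG}EG$ is the strict Borel construction $X\mapsto (X\times EG)/G$ with its canonical projection to $EG/G=BG$; the paper's notation ``$(-)/G$'' is an abuse that a proof must resolve rather than reproduce.

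This is not merely cosmetic, because it propagates into your third step. You compute the derived counit by first replacing $X$ cofibrantly by $X\times EG$ and then applying $(-)/G$; but that is the recipe for a \emph{left} derived functor, whereas the right adjoint in a Quillen pair is derived by evaluating on fibrant objects. Had you applied the literal right adjoint to a fibrant $X$ you would get $X/G$, and the counit would fail to be an equivalence (for $X=\ast$ with trivial action it would be $G\to\ast$), i.e.\ with your stated adjoints the theorem is false. Your computation comes out right only because inserting $X\times EG$ silently converts $X/G$ into $(X\times EG)/G$, which is the correct right adjoint. To close the gap: set $U(X)=(X\times EG)/G\to BG$, verify the adjunction isomorphism $\Hom_G(A\times_{BG}EG,X)\cong\Hom_{/BG}(A,(X\times EG)/G)$ directly (a $G$-map $A\times_{BG}EG\to X$ together with the pullback projection to $EG$ gives a $G$-map to $X\times EG$ over $EG$, and one passes to quotients of free actions), note that $U$ preserves all weak equivalences so needs no fibrant replacement, and then your counit computation and the unit computation (now $A\to (A\times_{BG}EG\times EG)/G$, a weak equivalence rather than an isomorphism) go through as you wrote them.
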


\end{enumerate}

\section{Segal group actions}

Of course, in order to get hands-on calculations, it is useful to have a presentation of the $\infty$-category $Fun\left(\N(\B G),\N(\Sp^o)\right)$ as a model category. One such model is the Borel model structure on $\Sp^{\B G}$ and another is the slice model structure $\Sp_{/BG}$ (see \ref{Borel}). The advantage of the first is that it gives a direct access to the group and the space on which it acts but its disadvantage is that one cannot work with a "flexible" model of the group, e.g., $\Om BG$ nor of the space. In the second model the roles switch in that one may take any space of the homotopy type of $BG$ but there is no direct access to the group $G$ nor to the space on which it acts (which can only be obtained after taking homotopy fiber). As we shall see below, Segal group actions, have, to certain extent, the advantages of both of the models above, since on the one hand a Segal group is a "flexible" model for a simplicial group, and on the other hand, Segal group actions have the homotopy types of the group $G$ and the space on which it (coherently) acts as part of their initial data. We will make use of this advantage to obtain an invariance property of Segal group actions under weak monoidal functors (see \ref{invariance}).

We now come to the main notion of this work. Let $\alpha_0,\alpha_n:[0]\lrar [n]$ be the maps defined by $0\mapsto 0$ and $0\mapsto n$ respectively. Alternatively, $\alpha_0=d^nd^{n-1}\cdots d^1$ and $\alpha_n=d^0\cdots d^0$.
\begin{defn}\label{rel}
A \textbf{Segal group action} is a Reedy fibration of simplicial spaces $ \pi:A_\bullet\lrar B_\bullet $\\
such that:
\begin{enumerate}
\item $B_\bullet$ is a Segal group; 
\item for every $n$, the map $\xymatrix{A_n\ar@{>}[rr]^(0.4){(\alpha_0^*,\pi_n)} 
 && A_0\times_{B_0} B_n}$ is a weak equivalence.
\end{enumerate}
In this case, we say that the Segal group $B_\bu$ \textbf{acts} on $A_\bu$, or that the $\infty$-group $(B_1,|B_\bullet|,\eta:B_1\overset{\simeq}{\lrar}\Om |B_\bullet|)$ 
acts on $A_0$.
\end{defn}

\begin{rem}
One technical advantage of the Reedy fibrancy condition of \ref{rel} is that the structure maps are fibrations. This means that the ordinary notions of fibers, sections etcetara for these maps are homotopy invariant.
\end{rem}

The origin of Definition~\ref{rel} is \cite[Definition 5.1]{Pre} where it was called \textbf{homotopy action}. However, the reader may wonder about a difference between Definition~\ref{rel} and \cite[Definition 5.1]{Pre}. Namely, the definition we give here does not include the condition that the map  $$\xymatrix{A_n\ar@{>}[rr]^(0.4){(\alpha_n^*,\pi_n)}
 && A_0\times_{B_0} B_n}$$ is a weak equivalence. We will now show that this additional condition is implied by the conditions of Definition~\ref{rel} and is thus redundant. This was kindly pointed-out to us by Thomas Nikolaus. The proof we give here is independent.
 
\begin{pro}\label{p:drop d_0}
Let $\pi:A_\bullet\lrar B_\bullet$ be a Segal group action. Then $A_\bullet$ is a Segal groupoid and the map $$\xymatrix{A_n\ar@{>}[rr]^(0.4){(\alpha_n^*,\pi_n)}
 && A_0\times_{B_0} B_n}$$ is a weak equivalence.
\end{pro}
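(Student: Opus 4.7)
Since $B_\bullet$ is a Segal group, $B_0 \simeq *$, so homotopy pullbacks over $B_0$ are homotopy products. The hypothesis then provides a compatible family of equivalences $A_n \simeq A_0 \times B_n$ (natural in $[n]$) through which $\alpha_0^* = d_1^n$ becomes the first projection and $\pi_n$ the second. Write $\beta : A_0 \times B_1 \to A_0$ for the map corresponding to $d_0 : A_1 \to A_0$ under this identification. Since $s_0 : A_0 \to A_1$ is a common section of both $d_0$ and $d_1$ and $\pi_1 s_0 a_0 = 1 \in B_1$ (the degenerate 1-simplex), the identity $1$ acts trivially: $\beta(-,1) = \id_{A_0}$.

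I first show $A_\bullet$ satisfies the Segal condition. Iterating the hypothesis and the Segal condition on $B_\bullet$ gives $A_n \simeq A_0 \times B_1^{\times n}$; on the other side, the iterated homotopy pullback $A_1 \times_{A_0}^h \cdots \times_{A_0}^h A_1$ also computes to $A_0 \times B_1^{\times n}$, because $d_1 : A_1 \to A_0$ is the first projection so the gluing conditions over $A_0$ in the $B_1$-direction collapse. The technical heart is to prove that $\beta(-, c) : A_0 \to A_0$ is a weak equivalence for every $c \in B_1$: applying the hypothesis at level 2 combined with the Segal decomposition of $B_2$ yields $A_2 \simeq A_0 \times B_1 \times B_1$ via $(d_1 d_1, d_2 \pi_2, d_0 \pi_2)$, and computing the vertex $d_0 d_0 a_2 = d_0 d_1 a_2 \in A_0$ in two ways (iteratively through $d_2 a_2$ then $d_0 a_2$, versus directly through $d_1 a_2$) produces the associativity relation $\beta(\beta(a_0, c), c') \simeq \beta(a_0, c \cdot c')$, where $c \cdot c'$ is the Segal composition in $B_\bullet$. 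Group-likeness of $B_\bullet$ supplies homotopy inverses $c^{-1} \in B_1$ with $c \cdot c^{-1} \simeq c^{-1} \cdot c \simeq 1$; substituting these produces two-sided homotopy inverses $\beta(-, c^{-1})$ of $\beta(-, c)$. This shows $(\alpha_1^*, \pi_1)$ is a fiberwise weak equivalence over $B_1$, hence an equivalence; for $n > 1$ the Segal condition just established decomposes $\alpha_n^*$ as an $n$-fold iterated action, each factor an equivalence, completing the second claim.

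Finally, to verify $A_\bullet$ is a Segal groupoid I check that $(d_1, d_0) : A_2 \to A_1 \times_{d_0, A_0, d_0} A_1$ is an equivalence: in our coordinates this sends $(a_0, c, c') \mapsto ((a_0, c \cdot c'), (\beta(a_0, c), c'))$, with inverse sending a compatible pair $((a_0, b), (a_0'', b'))$ to $(a_0, b \cdot b'^{-1}, b')$, well-defined because $\beta(-, b \cdot b'^{-1})$ is an equivalence (recovering $a_0''$ from $a_0$). The main technical obstacle throughout is formalizing the composition $c \cdot c'$ and inverse $c^{-1}$ on $B_1$ as homotopy-coherent operations, via the two weak equivalences $(d_2, d_0), (d_1, d_0) : B_2 \xrightarrow{\sim} B_1 \times_{B_0}^h B_1$ coming from the Segal and group-like conditions respectively; once this coherence is in place, the arguments above are essentially the standard proof that a group-like $A_\infty$-monoid has invertible left-translations.
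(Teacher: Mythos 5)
Your argument is correct in its essentials, but it takes a genuinely different route from the paper. The paper never chooses coordinates: it assembles two commutative cubes of homotopy cartesian squares and reduces everything to showing that the square $d_0\colon A_1\to A_0$ over $d_0\colon B_1\to B_0$ is homotopy cartesian. That step is done by a fiber-and-section trick: with $F_0,F_1$ the fibers of $\pi_0,\pi_1$, the degeneracy induces $s_0^*\colon F_0\to F_1$, which is a weak equivalence because both fibers are identified with $A_0$ (using $B_0\simeq *$ and the hypothesis at level $1$), and $d_0^*s_0^*=\id$ then forces $d_0^*$ to be an equivalence; the remaining faces are cartesian by group-likeness of $B_\bullet$, the defining hypothesis, and a pasting argument using $d_1d_1=d_1d_2$. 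You instead trivialize over $B_0\simeq *$, extract the translation maps $\beta(-,c)$, and run the classical argument that a group-like homotopy-associative monoid acts by homotopy equivalences. Your route incurs more coherence debt, which you rightly flag: the composition $c\cdot c'$ and inverses must be handled homotopy-coherently; the group-like condition as stated a priori produces only one-sided inverses, so you need the standard ``inverse of an inverse'' step to get two-sided ones; and the ``inverse map'' you write down for the groupoid condition is only defined up to homotopy, so it should really be recast as a fiberwise-equivalence statement over $B_1\times B_1$ (both sides being fibrations, since $d_0\colon A_1\to A_0$ is a fibration by Reedy fibrancy). What your route buys is that invertibility of $\beta(-,c)$ is established for \emph{every} $c\in B_1$, i.e., over every path component of $B_1$, whereas the paper's section argument as written only tests the fiber over the degenerate $1$-simplex. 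Both proofs defer the general-$n$ and full-coherence details, so yours stands as a valid alternative at a comparable level of rigor.
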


\begin{proof}
Note that our fibrancy assumption implies that the map $$\xymatrix{A_n\ar@{>}[rr]^(0.4){(\alpha_n^*,\pi_n)}
 && A_0\times_{B_0} B_n}$$ is a weak equivalence if and only if the square $$\xymatrix{A_n\ar[r]\ar[d]_{\alpha_n^*} & B_n\ar[d]\\ A_0\ar[r] & B_0}$$ is homotopy cartesian. 

Consider the following commutative cube 

\begin{equation}\label{e:group-like} 
\xymatrix {
    A_2 \ar[rr]^{d_0} \ar[dd]_{d_1} \ar[dr]|-(0.5){\pi_2} && A_1 \ar[dr]|-(0.5){\pi_1} \ar'[d][dd] \\
    & B_2 \ar[rr]|-(0.35){d_0}\ar[dd]|-(0.35){d_1} && B_1 \ar[dd]^{d_0} \\
    A_1 \ar'[r][rr] \ar[dr]|-(0.5){\pi_1} && A_0 \ar[rd]|-(0.5){\pi_0} \\
    & B_1 \ar[rr]_{d_0} && B_0. \\
  }
\end{equation}  
Since $B_\bullet$ is a Segal-group and in particular group-like, the outer face is homotopy cartesian. Consider 
$$\xymatrix{A_2\ar[r]^{\pi_2}\ar[d]_{d_1} & B_2\ar[d]^{d_1} \\ A_1\ar[r]^{\pi_1}\ar[d]_{d_1} & B_1\ar[d]^{d_1}\\ A_0\ar[r] & B_0.}$$ Since $\pi:A_\bullet \lrar B_\bullet$  
is a Segal group action, the lower square is homotopy cartesian, and since $d_1d_1=d_1d_2$ the outer rectangle is homotopy cartesian. It follows that the upper square is homotopy cartesian; this square is the left-hand face of the cube~\ref{e:group-like}. Consider the following commutative diagram of solid arrows. 

\begin{equation*}\label{e:fibers}
 \xymatrix{
    F_1 \ar[rr] \ar@{-->}[dd]|-{d_0^*} \ar[dr]|-{\alpha}^\simeq   && A_1 \ar[rr]^{\pi_1} \ar@{-->}'[d][dd]|-(0.35){d_0} \ar[dr]|-(0.5){(d_1,\pi_2)} && B_1 \ar@{=}[dr] \ar'[d][dd]|-(0.35){d_0} \\
    & A_0\ar[rr]\ar@{=}[dd] && A_0\times B_1 \ar[rr]^(0.3){pr_1} \ar[dd]|-(0.3){pr_0} && B_1 \ar[dd] \\
    F_0 \ar'[r][rr]\ar[dr]|-{\beta}^{\simeq} \ar@/^1pc/[uu]|-{s_0^*} && A_0 \ar'[r][rr]|-(0.6){\pi_0} \ar@{=}[dr] \ar@/^1pc/[uu]|-(0.5){^{\;}}|-(0.55){^{s_0}} && B_0 \ar[rd] \\
    & A_0\ar[rr] && A_0 \ar[rr] && {*} \\
  }
\end{equation*}

Here, $F_0$ and $F_1$ are fibers of $\pi_0$ and $\pi_1$ (we assume a base-point in $B_1$ was chosen), the maps $s_0^*$ and $d_0^*$ are the ones induces by $s_0$ and $d_0$ and $\beta$ is the map induced between the fibers. Since $B_0$ is contractible, $\beta$ is an equivalence and it follows from $2$-out-of-$3$ that $s_0^*$ is an equivalence. Since $d_0^* s_0^*=\id$ it follows that $d_0^*$ is an equivalence, which means that the lower face of ~\ref{e:group-like} is homotopy cartesian. We now deduce that all the faces of the cube~\ref{e:group-like} are homotopy cartesian and in particular, cartesianess of the inner face means that $A_\bullet$ satisfies the group-like condition. 

Consider now the following commutative cube.

\begin{equation}\label{e:Segal} 
\xymatrix {
    A_2 \ar[rr]^{d_0} \ar[dd]_{d_2} \ar[dr]|-(0.5){\pi_2} && A_1 \ar[dr]|-(0.5){\pi_1} \ar'[d][dd] \\
    & B_2 \ar[rr]|-(0.35){d_0}\ar[dd]|-(0.35){d_2} && B_1 \ar[dd]^{d_1} \\
    A_1 \ar'[r][rr] \ar[dr]|-(0.5){\pi_1} && A_0 \ar[rd]|-(0.5){\pi_0} \\
    & B_1 \ar[rr]_{d_0} && B_0 \\
  }
\end{equation}  

The outer face is homotopy cartesian since $B_\bullet$ is a Segal space and the right-hand face is homotopy cartesian since $\pi:A_\bullet\lrar B_\bullet$ is a Segal group action. We showed that the lower face is homotopy cartesian and it follows that all the faces of~\ref{e:Segal} are homotopy cartesian. In particular, cartesianess of the inner face means that the Segal map for $n=2$ is an equivalence. A similar argument shows that all Segal maps are equivalences (we omit the details for the sake of brevity). It follows that $A_\bullet$ is a Segal groupoid. The homotopy cartesianess of the upper and right-hand faces of the cube \ref{e:group-like} means that the map $$\xymatrix{A_n\ar@{>}[rr]^(0.4){(\alpha_n^*,\pi_n)}
 && A_0\times_{B_0} B_n}$$ is a weak equivalence for $n=1,2$ and a similar argument shows this holds for any $n\geq 1$.

\end{proof}

Put differently, Proposition~\ref{p:drop d_0} shows that Definition~\ref{rel} simplifies \cite[Definition 5.1]{Pre}. In fact, it also allows a comparison between the notion of a Segal group action and an $\infty$-categorical notion of a "group action", as was defined in \cite{NSS}.

\begin{cor}\label{c:drop d_0}
Let $\pi:A_\bullet\lrar B_\bullet$ be a Segal group action, viewed as a functor $\Delta^{\op}\lrar \sS^{[1]}$. Then the underlying $\infty$-functor of $\pi$ is a group action in the sense of \cite[Definition 3.1]{NSS}.
\end{cor}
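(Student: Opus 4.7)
The plan is to unpack \cite[Definition 3.1]{NSS} and check that the data of a Segal group action, once passed to the $\infty$-categorical setting, satisfies its axioms. Recall that an NSS $G$-action on an object $V$ in an $\infty$-topos $\mathbf{H}$ is a groupoid object $(V/G)_\bullet$ in $\mathbf{H}$ equipped with a morphism to the groupoid object $\mathbf{B}G$ presenting $G$, such that $(V/G)_0 \simeq V$ and the comparison squares induced by the inclusions $[0]\to [n]$ are homotopy cartesian.

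First I would translate the strict Reedy-fibrant data $\pi: A_\bullet \lrar B_\bullet$ into a morphism of simplicial objects in the $\infty$-category of spaces. Because $B_\bullet$ is a Reedy-fibrant Segal group, the classical Segal theorem (recalled in \ref{i: Segal}) identifies its image in the $\infty$-category of spaces with the groupoid object $\mathbf{B}G$ corresponding to the $\infty$-group $G \simeq B_1 \simeq \Om |B_\bullet|$. The Reedy fibrancy of $\pi$ ensures that the strictly commutative squares appearing in Definition \ref{rel} and Proposition \ref{p:drop d_0} represent homotopy pullback squares in the $\infty$-category of spaces, so that the underlying $\infty$-functor of $\pi$ is well-defined and all the homotopy-theoretic properties of the strict model translate directly to their $\infty$-categorical counterparts.

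Second, I would verify the NSS axioms one by one. The fact that $A_\bullet$ presents a groupoid object in the $\infty$-category of spaces is exactly the content of Proposition \ref{p:drop d_0}, which shows that $A_\bullet$ is a Segal groupoid. The cartesianness of the comparison squares indexed by $\alpha_0, \alpha_n : [0] \lrar [n]$ follows from Definition \ref{rel}(2) for $\alpha_0$ and from the additional equivalence established in Proposition \ref{p:drop d_0} for $\alpha_n$. Together these verify all the NSS conditions, so the underlying $\infty$-functor of $\pi$ is indeed an action in the sense of \cite[Definition 3.1]{NSS}. The main (and minor) obstacle is the careful formal translation between the strict Reedy-fibrant language used here and the $\infty$-categorical language of \cite{NSS}; this is, however, routine once one accepts that Reedy-fibrant simplicial spaces model simplicial objects in the $\infty$-category of spaces and that Reedy fibrations between Reedy-fibrant objects present homotopy-coherent maps, so no further homotopy-theoretic content is required.
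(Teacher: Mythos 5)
Your argument is correct and matches the paper's (implicit) reasoning: the paper states this corollary without proof, treating it as an immediate consequence of Proposition~\ref{p:drop d_0}, and your verification—that the Segal groupoid structure of $A_\bullet$ and the two families of cartesian comparison squares supplied by Definition~\ref{rel}(2) and Proposition~\ref{p:drop d_0} are exactly the axioms of \cite[Definition 3.1]{NSS} once one passes to underlying $\infty$-functors—is precisely the intended deduction.
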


\begin{exam}\label{bar}
Let $G$ be a simplicial group and $X$ a (right) $G$-space. The Bar construction \cite[\S 7]{May} provides, up to a Reedy fibrant replacement, a Segal group action $Bar_{\bu}(X,G)\lrar Bar_\bu(G)$. The maps 
$$\xymatrix{X\times G^n\ar@{>}[rr]<0.5ex>^(0.4){(\alpha_n^*,\pi_n)} \ar@{>}[rr]<-0.5ex>_(0.4){(\alpha_0^*,\pi_n)}
 & & X\times G^n}$$ are given by the identity and $(x,g_1,...,g_n)\mapsto (xg_1\cdots g_n, g_1,...,g_n)$ (respectively).
\end{exam}

For an $\infty$-group $\G$ together with a fixed choice of a Segal group $B_\bu$ for $\G$, we can thus consider the full subcategory of $\sS_{/B_\bu}$ spanned by the Segal group actions. This category in meant to give a 'soft' model for $G$-actions where $G$ is some simplicial group with $BG\simeq B\G$.  
 
\section{The Segal group action model structure}

Throughout, we fix a Segal group $B_\bu$. 
\begin{defn}\label{Model0}
The \textbf{Segal group action} model structure, $(\sS_{/B_\bu})_{\text{\tiny{SegAc}}}$ is the left Bousfield localization of $(\sS_{/B_\bu})_{\text{\tiny{Reedy}}}$ with respect to the maps 
$$\xymatrix@=0.65pc{c_*\Sim^0\ar[dr]\ar[rr]^{\alpha_0^*} && c_*\Sim^n\ar[dl]^\sigma\\ & B_\bu &}$$

defined for all pairs $(n,\sigma)$ where $n\geq 1$ and $\sigma:c_*\Sim^n\lrar B_\bu$.
\end{defn}
\begin{pro}\label{fc0}
The fibrant-cofibrant objects of $(\sS_{/B_\bu})_{\text{\tiny{SegAc}}}$ are precisely the Segal group actions.
\end{pro}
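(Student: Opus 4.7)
The plan is to unwind the fibrancy condition in the Bousfield-localized model structure and identify it with condition (2) of Definition~\ref{rel}. First, by item (f) of Section~\ref{s: preliminaries}, every object of $(\sS_{/B_\bu})_{\text{\tiny{Reedy}}}$ is cofibrant, so cofibrancy in the localization is automatic. By the general theory of left Bousfield localization, an object is fibrant in $(\sS_{/B_\bu})_{\text{\tiny{SegAc}}}$ if and only if it is fibrant in $(\sS_{/B_\bu})_{\text{\tiny{Reedy}}}$ and local with respect to each localizing arrow. Reedy fibrancy in the slice gives precisely a Reedy fibration $\pi:A_\bu\lrar B_\bu$, matching the fibrancy hypothesis of Definition~\ref{rel}, and since $B_\bu$ is a Segal group by standing assumption, condition (1) of Definition~\ref{rel} holds by fiat. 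Hence the entire content of the proposition reduces to translating locality into condition (2).

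The central computation is that of the derived mapping space in the slice. I would observe that
$$\map_{\sS_{/B_\bu}}\bigl(\sigma:c_*\Sim^n\lrar B_\bu,\;\pi\bigr)$$
is, by definition, the fiber over $\sigma$ of the map $\map_{\sS}(c_*\Sim^n,A_\bu)\to \map_{\sS}(c_*\Sim^n,B_\bu)$, which by item (b) is exactly $\pi_n:A_n\lrar B_n$. Because $B_\bu$ is Reedy fibrant (as a Segal group) and $\pi$ is a Reedy fibration, a standard induction over the Reedy filtration shows each $\pi_n$ is a Kan fibration, so the strict fiber models the homotopy fiber $\mathrm{hofib}_\sigma(\pi_n)$; cofibrancy of the domain and Reedy fibrancy of the codomain in the slice further ensure that this is indeed the derived mapping space.

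Under this identification, the map induced by the localizing arrow $\alpha_0^*:c_*\Sim^0\lrar c_*\Sim^n$ becomes the natural comparison of homotopy fibers $\mathrm{hofib}_\sigma(\pi_n)\lrar \mathrm{hofib}_{\alpha_0^*\sigma}(\pi_0)$. Requiring this to be a weak equivalence for every $n\geq 1$ and every choice of basepoint $\sigma\in B_n$ is, by a standard fiber-wise criterion, equivalent to the square
$$\xymatrix{A_n \ar[r]^{\alpha_0^*} \ar[d]_{\pi_n} & A_0 \ar[d]^{\pi_0} \\ B_n \ar[r]^{\alpha_0^*} & B_0}$$
being homotopy cartesian. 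Using once more that $\pi_0$ is a Kan fibration, this homotopy-cartesianness is equivalent to the canonical map $A_n\lrar A_0\times_{B_0}B_n$ being a weak equivalence, which is precisely condition (2) of Definition~\ref{rel}.

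The only mildly delicate point I anticipate is the passage from the strict fiber to the homotopy fiber of $\pi_n$, i.e., the assertion that Reedy fibrations into a Reedy-fibrant base are pointwise Kan fibrations; I would handle this by a brief inductive argument on $n$ using the matching-object description of Reedy fibrations, leveraging the fact that matching objects of Reedy fibrant objects are Kan complexes and that matching maps of Reedy fibrations are Kan fibrations. Everything else is formal unwinding of definitions.
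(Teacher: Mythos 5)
Your proposal is correct and follows essentially the same route as the paper's proof: cofibrancy is automatic since all objects of $(\sS_{/B_\bu})_{\text{\tiny{Reedy}}}$ are cofibrant and localization preserves cofibrations, and fibrancy is unwound by identifying $\map_{/B_\bu}(c_*\Sim^n,A_\bu)$ with the fiber of $\pi_n:A_n\to B_n$ over $\sigma$, so that locality with respect to the $\alpha_0^*$ maps becomes homotopy cartesianness of the relevant square, i.e.\ condition (2) of Definition~\ref{rel}. You spell out the strict-versus-homotopy-fiber point and the levelwise-Kan-fibration input more explicitly than the paper does, but the argument is the same.
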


\begin{proof}
In $(\sS_{/B_\bu})_{\text{\tiny{Reedy}}}$ all objects are cofibrant and since left Bousfield localizations do not change the class of cofibrations, all objects in $(\sS_{/B_\bu})_{\text{\tiny{SegAc}}}$ are cofibrant.

An object $\pi:A_\bu\lrar B_\bu$ is fibrant if and only if it is local with respect to the maps of definition \ref{Model0}. Unwinding the definitions, we see that
$$\map_{/B_\bu}(c_*\Sim^n,A_\bu)\overset{\sim}{\lrar} \map_{/B_\bu}(c_*\Sim^0,A_\bu)$$
$\Leftrightarrow$
\begin{equation}\label{equiv}
Fib(A_n\overset{\pi_n}{\lrar} B_n)\overset{\sim}{\lrar} Fib(A_0\overset{\pi_0}{\lrar} B_0).
\end{equation}

This in turn is the map of associated fibers on vertical arrows in the square
\begin{equation}\label{square}
\xymatrix{A_n\ar[r]\ar[d] & A_0\ar[d]\\B_n\ar[r] & B_0 }
\end{equation}
with the horizontal maps being $\alpha_0^*$. The equivalence \ref{equiv} is precisely the homotopy cartesianess of \ref{square}, which in turn is just the condition that 
$$\xymatrix{A_n \ar@{>}[rr]^{(\alpha_0^*,\pi_n)}
 && A_0\times_{B_0} B_n}$$ are equivalences. Hence, $A_\bu\lrar B_\bu$ is a Segal group action.
\end{proof}

Recall that our goal is to compare the Segal group action model structure to the Borel model model structure. In light of \ref{Borel}, we would like to compare $(\sS_{/B_\bu})_{\text{\tiny{SegAc}}}$ to $(\Sp_{/d^*B_\bu})_{\text{\tiny{KQ}}}$. Before that, it is worth verifying that the latter indeed models the Borel homotopy theory:
\begin{pro}\label{eDFK}
The rigidification map $\rho:d^*B_\bu\lrar BG$ (\textsection 2\ref{rigid}) induces a Quillen equivalence 
$$\xymatrix{\rho_*:(\Sp_{/d^*B_\bu})_{\text{\tiny{KQ}}}\ar[r]<1ex> & (\Sp_{/BG})_{\text{\tiny{KQ}}}:\rho^!\ar[l]<1ex>_(0.45){\perp}}$$
\end{pro}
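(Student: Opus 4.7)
The plan is to recognize this as an instance of the following standard fact: for a right proper model category $M$ and a weak equivalence $f : A \to B$, the slice adjunction $f_* : M_{/A} \rightleftarrows M_{/B} : f^!$ (post-composition and pullback) is a Quillen equivalence of slice model structures. In our setting $\rho_*$ is post-composition with $\rho$ and its right adjoint $\rho^!$ is pullback along $\rho$. That the pair is a Quillen adjunction is immediate: by Preliminaries (g), cofibrations, fibrations and weak equivalences in both $(\Sp_{/d^*B_\bullet})_{\text{\tiny{KQ}}}$ and $(\Sp_{/BG})_{\text{\tiny{KQ}}}$ are detected on the underlying map of spaces, and $\rho_*$ leaves the underlying map unchanged; hence it preserves cofibrations and trivial cofibrations.

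The crux is to verify that $\rho$ is itself a weak equivalence of simplicial sets. By construction $\rho$ is the composite $d^*B_\bullet \to \widehat{d^*B_\bullet} \to B\GG\widehat{d^*B_\bullet}$, and item \ref{rigid} of the preliminaries identifies both arrows as weak equivalences provided $d^*B_\bullet$ is connected. Connectedness follows from $B_0\simeq *$ together with the standard equivalence $d^*B_\bullet\simeq |B_\bullet|$ (valid for every bisimplicial set), since the realization of a Segal group is the classifying space of an $\infty$-group and is therefore connected.

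To establish the Quillen equivalence I would take an arbitrary $X\to d^*B_\bullet$ (automatically cofibrant) and a fibrant $Y\to BG$ (i.e.\ $Y\to BG$ a Kan fibration) and check that a map $\rho_*X\to Y$ is a weak equivalence of spaces if and only if its adjoint $X\to Y\times_{BG}d^*B_\bullet$ is. Contemplating the diagram
\begin{equation*}
\xymatrix{ X \ar[r] \ar[dr] & Y \times_{BG} d^*B_\bullet \ar[d] \ar[r] & d^*B_\bullet \ar[d]^\rho \\ & Y \ar[r] & BG, }
\end{equation*}
right properness of $\Sp_{\text{\tiny{KQ}}}$ applied to the Kan fibration $Y\to BG$ and the weak equivalence $\rho$ forces the middle vertical map to be a weak equivalence; a $2$-out-of-$3$ argument in the triangle on the left then finishes the proof.

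The only non-formal ingredient is verifying that $\rho$ is a weak equivalence, which reduces to the connectedness of $d^*B_\bullet$; once that is pinned down, everything else is the general right-properness manipulation for slicing at a weak equivalence.
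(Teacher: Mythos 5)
Your proposal is correct and follows essentially the same route as the paper: the paper's one-line proof simply observes that $d^*B_\bu$ is a $0$-connected Kan complex, so that $\rho$ is a weak equivalence between fibrant-cofibrant objects, and then invokes the same standard fact that slicing along such a weak equivalence yields a Quillen equivalence. You spell out the right-properness/pullback argument and the connectedness of $d^*B_\bu$ explicitly, which the paper leaves implicit, but there is no substantive difference in approach.
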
 
\begin{proof} 
The space $d^*B_\bu$ is a $0$-connected Kan complex so that $\rho$ is a weak equivalence between fibrant-cofibrant objects.
\end{proof}
That settled, we recall a standard
\begin{obs}
Let $\cal{C},\cal{D}$ be categories and $$\xymatrix{F:\cal{C}\ar[r]<1ex> & \cal{D}:U\ar[l]<1ex>_{\perp}}$$ an adjoint pair. Then for every object $c\in \cal{C}$ there is an induced adjunction on slice categories $$\xymatrix{F_c:\mathcal{C}_{/ c}\ar[r]<1ex> & \mathcal{D}_{/ F c}:U_{c}\ar[l]<1ex>_{\perp}}$$ where $U_{c}$ is defined by applying $U$ and then pulling back along the unit $1\Rightarrow UF$.
\end{obs}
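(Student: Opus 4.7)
The plan is a direct verification using the universal property of the pullback and the given adjunction. First I would define the two functors on objects: for $f : X \to c$ in $\mathcal{C}_{/c}$, set $F_c(X,f) := (FX, Ff : FX \to Fc)$; for $g : Y \to Fc$ in $\mathcal{D}_{/Fc}$, set $U_c(Y,g) := (c \times_{UFc} UY, \mathrm{pr}_1)$, where the pullback is taken against the unit $\eta_c : c \to UFc$ and against $Ug : UY \to UFc$. Both functors extend to morphisms in the obvious way, using functoriality of $F$, of $U$, and of the pullback construction.

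Next I would establish the natural bijection
\[
\Hom_{\mathcal{D}_{/Fc}}(F_c(X,f),(Y,g)) \;\cong\; \Hom_{\mathcal{C}_{/c}}((X,f), U_c(Y,g)).
\]
A morphism on the left is a map $\phi : FX \to Y$ in $\mathcal{D}$ satisfying $g \circ \phi = Ff$. The given adjunction $F \dashv U$ transposes $\phi$ bijectively to $\psi := U\phi \circ \eta_X : X \to UY$. Naturality of $\eta$ turns the triangle $g \circ \phi = Ff$ into the identity $Ug \circ \psi = \eta_c \circ f$, which is precisely the data required by the universal property of the pullback to factor $(f,\psi)$ through $c \times_{UFc} UY$, and hence to produce a morphism $(X,f) \to U_c(Y,g)$ over $c$. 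Reversing each step yields the inverse assignment, and naturality in both variables follows at once from naturality of the adjunction bijection and from the functoriality of pullbacks.

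There is no real obstacle in this argument; the only point worth highlighting is the role of the pullback in the definition of $U_c$. It appears because the naive candidate $(UY, Ug)$ lives over $UFc$ rather than over $c$, and pulling back along the unit $\eta_c$ is the canonical, universal way to convert an object over $UFc$ into an object over $c$ that remembers the original map to $UFc$. This universality is exactly what makes the hom-set bijection above work out, and it is the only categorical input needed beyond the original adjunction.
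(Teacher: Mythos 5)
Your proof is correct and is precisely the standard argument the paper leaves implicit: the Observation is stated without proof as a recollection of a well-known fact, and your verification via transposition along $F\dashv U$ combined with the universal property of the pullback along the unit is exactly the intended one. The only tacit hypothesis worth noting is that $\mathcal{C}$ must admit the relevant pullbacks, which holds in all cases the paper uses.
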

The above observation is applied directly to our case. By abuse of notation, we will denote the induced adjunction on slice categories as before:
\begin{equation*}\label{adj}
\xymatrix{d^*:\sS_{/B_\bu}\ar[r]<1ex> & \Sp_{/d^*B_\bu}:d_*\ar[l]<1ex>_{\perp}.}
\end{equation*}
We are now at a state to formulate the main assertion of this paper: 

\begin{thm}\label{q.e.}
Let $B_\bu$ be a Segal group. The adjoint pair 
\begin{equation}\label{q.adj}
\xymatrix{d^*:(\sS_{/B_\bu})_{\text{\tiny{SegAc}}}\ar[r]<1ex> & (\Sp_{/d^*B_\bu})_{\text{\tiny{KQ}}}:d_*\ar[l]<1ex>_{\perp}.}
\end{equation}
is a Quillen equivalence.
\end{thm}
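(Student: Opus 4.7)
My plan is to verify three things in turn: that $d^*\dashv d_*$ descends to a Quillen pair on the localized slices, that the derived counit is a weak equivalence on fibrant objects, and that $d_*$ reflects weak equivalences between fibrant objects; together these imply the Quillen equivalence. For the Quillen pair, I will use that $d^*\dashv d_*$ is Quillen between $\sS_{\text{\tiny{Reedy}}}$ and $\Sp_{\text{\tiny{KQ}}}$: the functor $d^*$ preserves monomorphisms (which are the cofibrations in both structures by \S\ref{s: preliminaries}) and level-wise weak equivalences (which are the Reedy weak equivalences by the identification of the Reedy and injective structures). Slicing is formal. To descend to the SegAc localization, since every object of $(\sS_{/B_\bu})_{\text{\tiny{Reedy}}}$ is cofibrant, a standard criterion reduces the check to verifying that $d^*$ sends the localizing maps $\alpha_0^*:c_*\Sim^0\lrar c_*\Sim^n$ over $B_\bu$ to weak equivalences in $(\Sp_{/d^*B_\bu})_{\text{\tiny{KQ}}}$; and since $d^*c_*=\id_\Sp$, these become $\Sim^0\lrar\Sim^n$, which are weak equivalences.

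For the derived counit, let $\pi:E\thrar d^*B_\bu$ be a Kan fibration and set $\widetilde{E}:=d_*E\times_{d_*d^*B_\bu}B_\bu$, the result of applying $d_*$ in the slice. The key step is to show that $\widetilde{E}\lrar B_\bu$ is itself a Segal group action. Reedy fibrancy is automatic since $d_*$ is right Quillen and Reedy fibrations are pullback-stable. For the Segal condition at level $n$, I would compute $\widetilde{E}_n=E^{\Sim^n}\times_{(d^*B_\bu)^{\Sim^n}}B_n$ and invoke the pullback-cotensor axiom: the trivial cofibration $\Sim^0\hrar\Sim^n$ against the fibration $\pi$ makes $E^{\Sim^n}\lrar E\times_{d^*B_\bu}(d^*B_\bu)^{\Sim^n}$ a trivial fibration, and pulling back along $B_n$ yields $\widetilde{E}_n\overset{\sim}{\lrar}\widetilde{E}_0\times_{B_0}B_n$. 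With $\widetilde{E}$ now a Segal group action, the counit $d^*\widetilde{E}\lrar E$ is a map of Kan fibrations over the connected base $d^*B_\bu$; on homotopy fibers it restricts to $\widetilde{E}_0=E\times_{d^*B_\bu}B_0\overset{\sim}{\lrar} F_h(\pi)$ (using $B_0\simeq\ast$), hence is a weak equivalence on total spaces. The very same level-$0$ identification shows that $d_*$ reflects weak equivalences between Segal group actions, completing the Quillen-equivalence criterion.

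The principal obstacle is justifying that $d^*\widetilde{E}\lrar d^*B_\bu$ is a Kan fibration with homotopy fiber $\widetilde{E}_0$. This is a Bousfield--Friedlander-type statement for Reedy fibrations of simplicial spaces and requires a $\pi_*$-Kan (realization-fibration) condition on the base, which I expect to follow from $B_\bu$ being a Segal group (in particular group-like and with $B_0\simeq\ast$). As a fallback, I would argue directly from $d^*X\simeq\hocolim_{\Sim^{op}}X_\bu$: the Segal conditions on $\widetilde{E}$ and $B_\bu$ force the homotopy-colimit fiber sequence over $d^*B_\bu\simeq\hocolim_{\Sim^{op}} B_\bu$ to match the desired fibration sequence, which suffices to identify the homotopy fiber even without a point-set fibration check.
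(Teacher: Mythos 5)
Your reduction of the Quillen-pair check to the universal property of left Bousfield localization (using $d^*c_*=\id$, so the localizing maps become $\Sim^0\hrar\Sim^n$) is correct and arguably cleaner than the paper's route, and your identification of $d_*(E\thrar d^*B_\bu)$ as a Segal group action via the pullback-cotensor axiom applied to the anodyne map $\Sim^0\hrar\Sim^n$ is a genuinely more direct argument than the paper's Proposition~\ref{fib}, which detours through the rigidification $d^*B_\bu\simeq BG$, Theorem~\ref{Borel} and Lemma~\ref{folklore}. However, there are two gaps. The first is structural: the criterion you announce --- derived counit a weak equivalence on fibrant objects together with the \emph{right} adjoint $d_*$ reflecting weak equivalences between fibrant objects --- does not imply a Quillen equivalence. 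For any left Bousfield localization the identity adjunction $\id:\cl{M}\rightleftarrows L_S\cl{M}:\id$ satisfies both conditions, yet is an equivalence only when the localization is trivial. The valid pairings are: counit on fibrant objects with the \emph{left} adjoint reflecting weak equivalences between cofibrant objects, or unit on cofibrant objects with the right adjoint reflecting weak equivalences between fibrant objects. What you never check is the derived unit: for a Segal group action $A_\bu\lrar B_\bu$ one must show that $A_\bu\lrar d_*\bigl((d^*A_\bu)^{\mathrm{fib}}\bigr)$ is a levelwise equivalence, which by your level-$n$ identification reduces to showing $A_0\overset{\sim}{\lrar}F_h(d^*A_\bu\lrar d^*B_\bu)$. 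This is where the localization genuinely enters and it cannot be bypassed.

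The second gap is the one you flag yourself, and it is the heart of the theorem: the claim that the diagonal of a Segal group action over $d^*B_\bu$ has homotopy fiber its zeroth space. Your expectation that a Bousfield--Friedlander/$\pi_*$-Kan argument applies is plausible but is not a proof (note that $B_1$ need not be connected, so connectivity shortcuts are unavailable). The paper resolves this by citing Puppe's theorem \cite{Pup}, whose hypothesis --- that \emph{every} structure-map square of $A_\bu\lrar B_\bu$ is homotopy cartesian --- is exactly what Proposition~\ref{p:drop d_0} supplies; your cotensor computation only yields the squares for the maps $\alpha_0$ directly. Once Puppe's theorem is invoked, both your counit argument and the missing unit check go through, so the mathematical content is recoverable; but as written the proof is incomplete at its central analytic step, and its overall logical scheme would, taken literally, prove that every left Bousfield localization is a Quillen equivalence.
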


We begin with a  
\begin{pro}\label{p: Jardine}
If $B_\bu$ is a Segal group, the space $d^*B_\bu$ is a Kan complex. 
\end{pro}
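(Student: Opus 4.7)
My plan is to verify that $d^*B_\bullet$ has the right lifting property against every horn inclusion $\Lambda^n_k \hookrightarrow \Sim^n$ for $n \geq 1$ and $0 \leq k \leq n$. By the adjunction $d_! \dashv d^*$ recalled in \S\ref{s: preliminaries}, each such lifting problem translates into a lifting problem for $B_\bullet \rar \ast$ in $\sS$ against the bisimplicial inclusion $d_!\Lambda^n_k \hookrightarrow d_!\Sim^n = \Sim^{n,n}$. So the task reduces to finding such lifts using the structural properties of $B_\bullet$ as a Segal group.

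The most direct route would be to invoke a theorem of Jardine: the diagonal of a Reedy fibrant simplicial space that satisfies the Segal and group-like conditions is a Kan complex. Since a Segal group is by hypothesis Reedy fibrant, Segal, and group-like (and further has $B_0 \simeq *$, which strengthens but does not weaken the conclusion), Jardine's criterion would apply directly and yield the result. This is the route I would take first.

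A self-contained proof is also feasible. Reedy fibrancy of $B_\bullet$ ensures that each $B_n$ is a Kan complex and that the matching maps $B_n \thrar M_n B_\bullet$ are Kan fibrations; this suffices to extend any horn configuration in $d^*B_\bullet$ to a \emph{candidate} simplex at the bisimplicial level. The Segal condition then controls how such a candidate decomposes along lower-dimensional faces, while the group-like hypothesis, encoded by the weak equivalence $(d_0,d_1): B_2 \overset{\sim}{\rar} B_1 \times_{B_0} B_1$, supplies the inverses needed to align the candidate with the given horn. In particular, inner horn fillers come for free from the Segal/Reedy structure, and the group-like condition handles the outer horns, exactly as it does in the more familiar case of the nerve of a groupoid.

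The main obstacle I anticipate in a direct approach is the combinatorial bookkeeping required to match the face structure of a horn in the diagonal with the corresponding lifting data for a bisimplicial map; every $n$-simplex of $d^*B_\bullet$ lives in $(B_n)_n$, and keeping track of which simplicial direction supplies which face during horn-filling is delicate. The virtue of invoking Jardine's theorem is precisely that it abstracts this bookkeeping away, which is why I would use it rather than grinding through horn-fillers by hand.
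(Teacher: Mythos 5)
Your primary route---invoking Jardine's results on the diagonal model structure, with the Segal condition supplying the inner horn extension conditions and the group-like condition supplying the outer ones---is exactly the argument the paper gives (citing \cite[Corollary 1.6]{Jar} and \cite[Theorem 2.14]{Jar}). The proposal is correct and takes essentially the same approach.
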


\begin{proof}
A simplicial space $B_\bu$ satisfying the extension condition with respect to the maps of degree-wise discrete simplicial spaces $c_*\Lambda^n_i\lrar c_*\Del^n$, for $0\leq i\leq n$, is fibrant in the diagonal model structure of \cite[Corollary $1.6$]{Jar}. Moreover, the realization $|B_\bu|$ of such a simplicial space is a Kan complex by \cite[Theorem $2.14$]{Jar}. Since $B_\bu$ is a Segal space, it satisfies the extension condition with respect to $c_*\Lambda^n_i\lrar c_*\Del^n$ for $0<i<n$ and since $B_\bu$ is group-like, it satisfies the extension condition with respect to $c_*\Lambda^n_i\lrar c_*\Del^n$ for $i=n$ and $i=n$. 
\end{proof}

The following is a well-known result, that can be deduced, for example, from \cite[Theorem 5.2]{RSS}. 
\begin{pro}\label{basic adj}
The adjunction $d^*\dashv d_*$ is a Quillen pair $$\xymatrix{d^*:\sS_{\text{\tiny{Reedy}}}\ar[r]<1ex> & \Sp_{\text{\tiny{KQ}}}:d_*\ar[l]<1ex>_(0.45){\perp}.}$$
\end{pro}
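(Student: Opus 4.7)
The plan is to verify directly that the left adjoint $d^*$ preserves cofibrations and trivial cofibrations, which suffices to make $d^* \dashv d_*$ into a Quillen pair.

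For cofibrations, I would invoke item (f) of \S\ref{s: preliminaries}: the Reedy and injective model structures on $\sS$ coincide, so the cofibrations of $\sS_{\text{\tiny{Reedy}}}$ are precisely the monomorphisms of bisimplicial sets, while the cofibrations of $\Sp_{\text{\tiny{KQ}}}$ are the monomorphisms of simplicial sets. Since $(d^*X)_n = X_{n,n}$ on simplices, the diagonal plainly sends monomorphisms to monomorphisms, hence cofibrations to cofibrations.

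For trivial cofibrations I would prove the stronger statement that $d^*$ preserves \emph{all} weak equivalences. Reedy weak equivalences on $\sS$ are levelwise weak equivalences of simplicial sets, and the classical bisimplicial diagonal lemma (e.g.\ Goerss--Jardine IV.1.7, or the Bousfield--Friedlander realization lemma) asserts that a map $f_\bu : X_\bu \to Y_\bu$ of bisimplicial sets whose component $f_n$ is a weak equivalence for every $n$ induces a weak equivalence $d^*f$ of simplicial sets. Hence $d^*$ preserves all weak equivalences, and in particular trivial cofibrations. The only non-formal ingredient is the diagonal lemma itself; the standard proof reduces by Reedy cofibrant replacement in the ``other'' simplicial direction and then identifies $d^*$ with a homotopy colimit over $\Sim^{\op}$ of the column spaces, which preserves levelwise equivalences between Reedy cofibrant diagrams. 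Alternatively, and more efficiently, one can appeal directly to \cite[Theorem 5.2]{RSS} as the paper indicates.
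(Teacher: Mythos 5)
Your argument is correct, but it takes a different route from the paper, which offers no proof at all beyond the remark that the statement "can be deduced, for example, from \cite[Theorem 5.2]{RSS}." You instead verify the Quillen condition directly: cofibrations are preserved because the Reedy and injective structures on $\sS$ coincide (item (f) of \S\ref{s: preliminaries}), so Reedy cofibrations are exactly the monomorphisms of bisimplicial sets, which the diagonal $(d^*X)_n = X_{n,n}$ visibly sends to monomorphisms; and \emph{all} weak equivalences are preserved by the classical diagonal lemma for bisimplicial sets (Goerss--Jardine IV.1.7 / Bousfield--Friedlander), since Reedy weak equivalences are by definition levelwise. Both steps are standard and correctly invoked, so the proof is complete modulo the diagonal lemma, which is a legitimate black box here. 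What your route buys is twofold: it is self-contained at the level of this paper's prerequisites, and it establishes the strictly stronger fact that $d^*$ preserves all weak equivalences (not merely trivial cofibrations), a fact the paper implicitly relies on later, e.g.\ when comparing $d^*$ of weakly equivalent simplicial spaces in the proof of Theorem \ref{q.e.}. What the citation to \cite{RSS} buys is brevity and a framework (simplicial structures on model categories) in which the adjoint triple $d_! \dashv d^* \dashv d_*$ is handled uniformly; either justification is acceptable.
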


\begin{cor}\label{slice adj}
The Quillen pair of proposition \ref{basic adj} induces a Quillen pair on slice model categories
\begin{equation*}
\xymatrix{d^*:(\sS_{/B_\bu})_{\text{\tiny{Reedy}}}\ar[r]<1ex> & (\Sp_{/d^*B_\bu})_{\text{\tiny{KQ}}}:d_*\ar[l]<1ex>_{\perp}.}
\end{equation*}
\end{cor}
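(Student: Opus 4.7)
The plan is to exploit the defining feature of slice model structures: a morphism in $(\sS_{/B_\bu})_{\text{\tiny{Reedy}}}$ (respectively $(\Sp_{/d^*B_\bu})_{\text{\tiny{KQ}}}$) is a cofibration, fibration, or weak equivalence precisely when its underlying morphism in $\sS_{\text{\tiny{Reedy}}}$ (resp.\ $\Sp_{\text{\tiny{KQ}}}$) has the same property. Because left-Quillen-ness can be checked on cofibrations and trivial cofibrations, the problem reduces entirely to a statement about the underlying categories.

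The key observation is that, by construction in the observation preceding the statement, the induced left adjoint on the slice acts on an object $A_\bu\to B_\bu$ simply by applying $d^*$ to source and target. Therefore, given a (trivial) cofibration $f:A_\bu\to A'_\bu$ in $(\sS_{/B_\bu})_{\text{\tiny{Reedy}}}$, its underlying map is a (trivial) Reedy cofibration in $\sS$, so $d^* f$ is a (trivial) cofibration in $\Sp_{\text{\tiny{KQ}}}$ by Proposition~\ref{basic adj}, and hence a (trivial) cofibration in $(\Sp_{/d^*B_\bu})_{\text{\tiny{KQ}}}$. This is all that is required.

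There is essentially no obstacle: the right adjoint $d_*$ on the slice, which by the observation is computed as $d_*(Y\to d^*B_\bu)= B_\bu\times_{d_*d^*B_\bu}d_*Y \to B_\bu$, automatically inherits the Quillen property from its left adjoint, and one need not analyze this formula directly. The only care needed is to note that the Quillen pair of Proposition~\ref{basic adj} is between the ambient categories (not any previously sliced version), so one genuinely invokes that proposition once, and the rest is formal bookkeeping about slice model structures.
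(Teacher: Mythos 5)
Your argument is correct and is exactly the reasoning the paper leaves implicit: the corollary is stated without proof because, as you observe, the slice-level left adjoint agrees with $d^*$ on underlying maps and (trivial) cofibrations in a slice model structure are detected in the ambient category, so Proposition \ref{basic adj} does all the work. Your remark that the right adjoint's pullback formula need not be analyzed is also the right call, since Quillen-ness is checked entirely on the left adjoint.
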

We would like to use Corollary \ref{slice adj} as a stepping stone in order to prove that \ref{q.adj} is indeed a Quillen pair. For this, we use the simplicial structure as follows.
\begin{lem}
Let $$\xymatrix{F:\cl{M}\ar[r]<1ex> & \cl{N}:U\ar[l]<1ex>_{\perp}}$$ be an adjoint pair of simplicial model categories in which all objects of $\cl{M}$ are cofibrant. Then $F\dashv U$ is a Quillen pair if and only if $F$ preserves cofibrations and $U$ preserves fibrant objects.
\end{lem}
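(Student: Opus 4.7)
For the \emph{only if} direction, I would simply observe that this is immediate from the definition of a Quillen adjunction: the left adjoint $F$ preserves cofibrations, and the right adjoint $U$ preserves fibrations, so in particular $UX\to U(*)\cong *$ is a fibration whenever $X\to *$ is, meaning $U$ preserves fibrant objects.

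For the substantive \emph{if} direction, my plan is to show that $F$ preserves trivial cofibrations; together with the hypothesis that $F$ preserves cofibrations, this is one of the standard equivalent formulations of being a Quillen pair. I would start by fixing a trivial cofibration $q:M\to N$ in $\cl{M}$. Since $F$ preserves cofibrations and preserves the initial object (being a left adjoint), $Fq$ is a cofibration in $\cl{N}$ and both $FM$, $FN$ are cofibrant (as $\emptyset\to M$ and $\emptyset\to N$ are cofibrations by the hypothesis on $\cl{M}$). The only remaining task is then to verify that $Fq$ is a weak equivalence in $\cl{N}$.

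The key tool will be the standard characterization in a simplicial model category (a consequence of $\mathrm{SM7}$ together with Yoneda in the homotopy category): a map between cofibrant objects is a weak equivalence if and only if it induces a weak equivalence of simplicial sets on simplicial mapping spaces into every fibrant object. Given any fibrant $Z\in\cl{N}$, the simplicial adjunction lets me rewrite $(Fq)^{*}:\map_{\cl{N}}(FN,Z)\to\map_{\cl{N}}(FM,Z)$ as $q^{*}:\map_{\cl{M}}(N,UZ)\to\map_{\cl{M}}(M,UZ)$. Since $U$ preserves fibrant objects, $UZ$ is fibrant in $\cl{M}$; and since every object of $\cl{M}$ is cofibrant, $q$ is already a weak equivalence between cofibrant objects, so applying the characterization inside $\cl{M}$ gives that $q^{*}$ is a weak equivalence of simplicial sets. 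Hence $(Fq)^{*}$ is a weak equivalence for every fibrant $Z\in\cl{N}$, and applying the characterization now in $\cl{N}$ (where $FM$, $FN$ are cofibrant) yields that $Fq$ is a weak equivalence, completing the proof. I do not expect any genuine obstacle; the argument is essentially a formality once the mapping-space characterization of weak equivalences is on the table. The one point to emphasize is that the three hypotheses — simplicial enrichment of the adjunction, global cofibrancy of $\cl{M}$, and preservation of fibrant objects by $U$ — are each used exactly once, and that the cofibrancy hypothesis is what removes the need for any cofibrant replacement when testing weak equivalence in $\cl{M}$.
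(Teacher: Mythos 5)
Your proof is correct. Note that the paper states this lemma without any proof at all, so there is no argument of the author's to compare against; what you have written is the standard argument and fills a genuine gap in the text. The ``only if'' direction is definitional as you say, and in the ``if'' direction the reduction to showing that $F$ preserves trivial cofibrations, followed by testing $Fq$ on mapping spaces into fibrant objects, is exactly the expected route. Two small remarks: the step you perform inside $\cl{M}$ (a trivial cofibration $q$ and a fibrant target $UZ$ give a weak equivalence $q^*$) is an immediate instance of SM7 --- the map $\map_{\cl{M}}(N,UZ)\to\map_{\cl{M}}(M,UZ)$ is even a trivial fibration --- so only the application in $\cl{N}$ needs the full detection statement that a map between cofibrant objects is a weak equivalence if and only if it induces weak equivalences on mapping spaces into all fibrant objects (see, e.g., Hirschhorn, \emph{Model Categories and Their Localizations}, \S 9.7). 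And you are right to flag that the adjunction must be simplicially enriched for $\map_{\cl{N}}(F(-),Z)\cong\map_{\cl{M}}(-,UZ)$; this is implicit in the paper's phrase ``adjoint pair of simplicial model categories'' and does hold for the adjunction $d^*\dashv d_*$ to which the lemma is applied.
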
 
Since left Bousfield localization does not change the class of cofibrations, it is clear that $d^*:(\sS_{/B_\bu})_{\text{\tiny{SegAc}}}\lrar (\Sp_{/d^*B_\bu})_{\text{\tiny{KQ}}}$ preserves cofibrations.  
\begin{pro}\label{fib}
For a Segal group $B_\bullet$ the functor
$$d_*:(\Sp_{/d^*B_\bu})_{\text{\tiny{KQ}}}\lrar (\sS_{/B_\bu})_{\text{\tiny{SegAc}}}$$ preserves fibrant objects.
\end{pro}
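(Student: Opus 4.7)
The plan is to invoke Proposition \ref{fc0}: given a Kan fibration $\pi: A \thrar d^*B_\bu$, show that the induced slice right adjoint sends it to a Segal group action. Unwinding the slice construction, this amounts to showing that $d_*A \times_{d_*d^*B_\bu} B_\bu \lrar B_\bu$ is a Segal group action. Reedy fibrancy comes for free: by Proposition \ref{basic adj}, $d_*$ is right Quillen for the Reedy structure, so $d_*A \lrar d_*d^*B_\bu$ is a Reedy fibration, and the class of Reedy fibrations is closed under pullback along $B_\bu \lrar d_*d^*B_\bu$.

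The substantive content is the Segal action condition at each $n \geq 1$. A direct computation identifies the $n$-th space of the pullback as $A^{\Del^n} \times_{(d^*B_\bu)^{\Del^n}} B_n$ and its putative target $A_0 \times_{B_0} B_n$ with $A \times_{d^*B_\bu} B_n$ -- provided one verifies that the composite $B_n \lrar (d^*B_\bu)^{\Del^n} \lrar d^*B_\bu$ (via the unit of $d^* \dashv d_*$ followed by evaluation at $\alpha_0$) agrees with $\alpha_0^* : B_n \lrar B_0$ followed by the canonical inclusion $B_0 \hrar d^*B_\bu$. The Segal action map then unfolds to the natural comparison
$$ A^{\Del^n} \times_{(d^*B_\bu)^{\Del^n}} B_n \lrar A \times_{d^*B_\bu} B_n. $$

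The crucial input here is Proposition \ref{p: Jardine}: $d^*B_\bu$ is a Kan complex, and hence so is $A$ as the domain of a Kan fibration over a Kan target. Since $\alpha_0 : \Del^0 \lrar \Del^n$ is a trivial cofibration (as $\Del^n$ is contractible), both evaluations $A^{\Del^n} \lrar A$ and $(d^*B_\bu)^{\Del^n} \lrar d^*B_\bu$ are trivial fibrations. The commuting square with these two evaluations as horizontals is therefore homotopy cartesian, so the natural map $A^{\Del^n} \lrar A \times_{d^*B_\bu} (d^*B_\bu)^{\Del^n}$ is a weak equivalence between two Kan fibrations over $(d^*B_\bu)^{\Del^n}$. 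Pulling this back along $B_n \lrar (d^*B_\bu)^{\Del^n}$ and invoking right properness of $\Sp_{\text{\tiny{KQ}}}$ produces the desired Segal action weak equivalence. The only subtle point to handle is the degree-zero bookkeeping noted above; beyond that, the homotopical content reduces entirely to the Kan property of $d^*B_\bu$ combined with right properness.
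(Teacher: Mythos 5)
Your argument is correct, but it takes a genuinely different route from the paper's. You prove the Segal condition directly in the slice pullback $P_n\cong A^{\Del^n}\times_{(d^*B_\bu)^{\Del^n}}B_n$ by a pure SM7 argument: since $d^*B_\bu$ is Kan (Proposition \ref{p: Jardine}) and $\alpha_0:\Del^0\rar\Del^n$ is anodyne, the comparison map $A^{\Del^n}\rar A\times_{d^*B_\bu}(d^*B_\bu)^{\Del^n}$ is a trivial Kan fibration (by the pullback-power axiom applied to the fibration $A\thrar d^*B_\bu$ — this is in fact slightly cleaner than invoking right properness, since trivial fibrations pull back along arbitrary maps), and base-changing along $B_n\rar(d^*B_\bu)^{\Del^n}$ yields the Segal action equivalence $P_n\overset{\sim}{\rar}P_0\times_{B_0}B_n$. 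The paper instead rigidifies $d^*B_\bu\simeq BG$, identifies $A\simeq X//G$ via Theorem \ref{Borel}, and uses Lemma \ref{folklore} (homotopy quotients commute with homotopy pullbacks) to compute $P_n\simeq X\times G^n$ explicitly. Your approach is more elementary and self-contained, bypassing the Borel construction and Lemma \ref{folklore} entirely; the paper's approach buys an explicit identification of $d_*(A\thrar d^*B_\bu)$ with (a model of) the bar construction $Bar_\bu(X,G)$, which is then reused in the proof of Theorem \ref{q.e.} to see that the unit $A_\bu\rar d_*d^*A_\bu$ is a levelwise equivalence. If you were to substitute your proof, you would need to supply that last identification by another (easy) argument, e.g.\ by noting that a map of Segal group actions over $B_\bu$ inducing an equivalence on zeroth spaces is a levelwise equivalence. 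The degree-zero bookkeeping you flag — that evaluation of the unit at the vertex $\alpha_0$ recovers $\alpha_0^*:B_n\rar B_0$ — is indeed the only point requiring care, and it checks out.
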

The proof of Proposition \ref{fib} relies on a folklore result which we address first.
\begin{lem}\label{folklore}
For a simplicial group $G$ and a co-span of $G$-spaces $X\lrar Y\llar Z$, the map $$(X\times_{Y}^h Z)//G\lrar X//G\times_{Y//G}^h Z//G$$ is a weak equivalence.
\end{lem}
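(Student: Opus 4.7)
The plan is to fit both sides of the comparison into Borel fibrations over $BG$ and to identify the map as a map of fibrations that is the identity on base and an equivalence on fiber.

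First, I would observe that both sides carry a canonical map to $BG$. The left-hand side has the standard Borel projection $(X \times_Y^h Z)//G \to BG$ with homotopy fiber $X \times_Y^h Z$. For the right-hand side, each of $X//G$, $Y//G$, $Z//G$ comes equipped with a Borel projection to $BG$; these three projections are compatible with the cospan $X//G \to Y//G \leftarrow Z//G$, so the homotopy pullback inherits a well-defined map $X//G \times_{Y//G}^h Z//G \to BG$.

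Next, I would compute the homotopy fiber of this map over a basepoint of $BG$. Using the general fact that taking homotopy fibers commutes with homotopy pullbacks, this fiber is the homotopy pullback of the Borel fibers of $X//G,\,Y//G,\,Z//G$, namely $X \times_Y^h Z$. Naturality of the Borel construction guarantees that the comparison map $(X \times_Y^h Z)//G \to X//G \times_{Y//G}^h Z//G$ lies over $BG$ and restricts, under the canonical identifications of fibers, to the identity of $X \times_Y^h Z$. A five lemma applied to the long exact sequences of homotopy groups of the two fibrations over $BG$ then yields the desired weak equivalence.

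A slicker alternative is to appeal directly to Theorem~\ref{Borel}: since $(-)//G$ is (the derived functor of) one half of a Quillen equivalence, it induces an equivalence of underlying homotopy theories, and therefore preserves \emph{all} finite homotopy limits, not only homotopy colimits. The lemma then follows once one recognizes that homotopy pullbacks in $(\Sp_{/BG})_{\text{\tiny{KQ}}}$ agree with those computed in $\Sp_{\text{\tiny{KQ}}}$, and that the homotopy pullback in $(\Sp^{\B G})_{\text{\tiny{Borel}}}$ of a cospan is the underlying-space homotopy pullback equipped with its induced diagonal $G$-action.

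The main obstacle I anticipate is making the fiber-comparison argument rigorous at the point-set level: one must choose compatible cofibrant and fibrant models for $X, Y, Z$ so that the model-categorical homotopy pullbacks and homotopy quotients are realized as strict pullbacks and strict orbit spaces, and so that the induced map of fibers over $BG$ can be identified with the identity on the nose. This bookkeeping is routine but tedious, which is why the abstract route via the Quillen equivalence is the preferable way to write the final proof.
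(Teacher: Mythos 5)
Your primary argument is essentially the paper's own proof: the paper also maps the Borel fiber sequence of $(X\times_Y^h Z)//G$ to the fiber sequence of $p:X//G\times_{Y//G}^h Z//G\to BG$ over the identity of $BG$, reduces to comparing fibers, and identifies the fiber of $p$ with $X\times_Y^h Z$ by commuting homotopy limits in a $3\times 3$ diagram --- which is exactly the ``homotopy fibers commute with homotopy pullbacks'' fact you invoke, followed by the same five-lemma step. Your ``slicker alternative'' is a genuinely different route that the paper does not take: since $(-)/G$ is the right adjoint of the Quillen equivalence of Theorem~\ref{Borel}, its derived functor preserves homotopy pullbacks, and the lemma follows once one checks (as you do) that homotopy pullbacks of cospans in $(\Sp_{/BG})_{\text{\tiny{KQ}}}$ and in $(\Sp^{\B G})_{\text{\tiny{Borel}}}$ are both computed on underlying spaces; the point worth making explicit is that this works because a cospan is a \emph{connected} diagram, so the slice-category homotopy limit agrees with the one in $\Sp$. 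The concrete route buys an explicit identification of the fiber that the paper reuses later (in the proof of Proposition~\ref{fib}), while the abstract route avoids all point-set bookkeeping at the cost of the slice-category caveat. Both are correct.
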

\begin{proof}
We have a map of (homotopy) fiber sequences 
$$\xymatrix@=1.2pc{X\times_Y^h Z\ar[r]\ar[d] & (X\times_Y^h Z)//G\ar[r]\ar[d] & BG\ar@{=}[d]\\F_h(p)\ar[r] & X//G\times_{Y//G}^h Z//G\ar[r]^(0.7)p & BG}$$ and it is thus enough to show that $X\times_Y^h Z\lrar F_h(p)$ is a weak equivalence. Consider the $3\times 3$ square
$$\xymatrix@=0.9pc{X//G\ar[r]\ar[d] & BG\ar[d] & {*}\ar[d]\ar[l]\\ Y//G\ar[r] & BG & {*}\ar[l]\\ Z//G\ar[u]\ar[r] & BG\ar[u] & {*}\ar[l]\ar[u]}$$
Taking homotopy limits of all rows and then of the resulting column, gives $X\times_Y^h Z$ and taking homotopy limits of all columns and then of the resulting row, gives $F_h(p)$. The result now follows from commutation of homotopy limits.
\end{proof}

\begin{proof}[of \ref{fib}]
By Ken Brown's lemma, $$d_*:(\Sp_{/d^*B_\bu})_{\text{\tiny{KQ}}}\lrar (\sS_{/B_\bu})_{\text{\tiny{Reedy}}}$$ preserves fibrant objects and it is thus left to verify that for a fibrant object $$A\thrar d^*B_\bu\in (\Sp_{/d^*B_\bu})_{\text{\tiny{KQ}}}$$ the map $d_*(A\thrar d^*B_\bu)$ satisfies condition $(2)$ of definition \ref{rel}. 
Let $P_\bu\in \sS$ be the domain of $d_*(A\thrar d^*B_\bu)$. The $n$-th level of $P_n$ is given by the pullback 
\begin{equation}\label{pb}
\xymatrix@=1.3pc{P_n\ar[r]^p\ar[d]_{\pi_n} & A^{\Sim^n}\ar@{->>}[d] \\ B_n\ar[r] & (d^*B_\bu)^{\Sim^n}.}
\end{equation} 
Since $B_\bullet$ is a Segal group, $d^*B_\bullet$ is a connected Kan complex. Thus, the rigidification map described in \S\ref{s: preliminaries}.\ref{i: rigidification} gives an equivalence $$d^*B_\bu\overset{\simeq}{\lrar} BG$$ for a simplicial group $G$ and we let $X$ be the homotopy fiber $F_h(A\lrar BG)$. By Theorem \ref{Borel}, we have $X//G\simeq A$ so that the square \ref{pb} is equivalent to 
\begin{equation*}
\xymatrix@=1.2pc{X//G\times_{BG}G^n\ar[r]\ar[d] & X//G\ar[d] \\ G^n\ar[r] & BG.}
\end{equation*}
This in turn may by rewritten as 
\begin{equation*}
\xymatrix@=1.2pc{X//G\times_{EG//G}^h G^{n+1}//G\ar[r]\ar[d] & X//G\ar[d] \\ G^n\ar[r] & BG}
\end{equation*}
where $G$ acts on $G^{n+1}$ via the inclusion to the last coordinate $G\lrar G^{n+1}$. 
By Lemma \ref{folklore} we can write $$P_n\simeq X//G\times_{EG//G}G^{n+1}//G\simeq (X\times_{EG}^h G^{n+1})//G\simeq (X\times G^{n+1})//G\simeq X\times G^n$$ (the last equivalence here is a straightforward identification) so that the equivalence is indeed induced by the projection maps $(\pi_n,p)$. Since $P_0\simeq X$ and the face maps $d_i$ of $P_n$ are defined via the above-mentioned pullbacks, it follows that the maps of \ref{rel} (2) are weak equivalences.
\end{proof}

\begin{proof}[of \ref{q.e.}]
We shall show that the unit and counit maps, $1\Rightarrow d_*d^*$ and $d^*d_*\Rightarrow 1$, have weak equivalences as their components when restricted to the categories of fibrant(-cofibrant) objects. Let $A\thrar d^*B_\bu$ be a fibrant object of $\Sp_{/d^*B_\bu}$. As we saw in \ref{fib}, $d_*(A\thrar d^*B_\bu)$ is a Segal group action $P_\bu\lrar B_\bu$ and thus has in each simplicial degree $$F_h(P_n\lrar B_n)\simeq P_0\simeq F_h(A\thrar d^*B_\bu).$$ Thus, by \cite{Pup}, $$F_h(d^*P_\bu\lrar d^*B_\bu)\simeq P_0$$ and it follows by the five lemma that $d^*d_*A\lrar A$ is a weak equivalence over $d^*B_\bu$.
On the other hand, if we are given a Segal group action $A_\bu\lrar B_\bu$, then $$F_h(d^*A_\bu\lrar d^*B_\bu)\simeq A_0$$ and the proof of \ref{fib} shows that we have a weak equivalence of simplicial spaces $$A_\bu\simeq d_*d^*A_\bu,$$ which is compatible with the maps to $B_\bu$ and hence a weak equivalence of Segal group actions.  
\end{proof}

\begin{cor}\label{crux}
There is a Quillen equivalence of simplicial combinatorial model categories
\begin{equation*}\label{e:crux}
\xymatrix{St:(\sS_{/B_\bu})_{\text{\tiny{SegAc}}}\ar[r]<1ex> & (\Sp^{\B G})_{\text{\tiny{Borel}}}:Un\ar[l]<1ex>_{\perp}.}\end{equation*}
\end{cor}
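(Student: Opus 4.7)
The plan is to derive Corollary \ref{crux} as a formal consequence of Theorem \ref{q.e.} together with two further Quillen equivalences already established in the paper, by composing adjunctions. I will take $St$ and $Un$ to be the corresponding composites of the left and right adjoints, respectively.

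Concretely, I would first invoke Theorem \ref{q.e.} for the Quillen equivalence $d^*\dashv d_*$ between $(\sS_{/B_\bu})_{\text{\tiny{SegAc}}}$ and $(\Sp_{/d^*B_\bu})_{\text{\tiny{KQ}}}$. Next, I would set $G:=\GG\widehat{d^*B_\bu}$, which is a well-defined simplicial group because, by Proposition \ref{p: Jardine}, $d^*B_\bu$ is a connected Kan complex; the rigidification map $\rho:d^*B_\bu\overset{\simeq}{\lrar} BG$ of \S\ref{s: preliminaries} is then a weak equivalence between fibrant objects of $\Sp_{\text{\tiny{KQ}}}$, so Proposition \ref{eDFK} yields a Quillen equivalence $\rho_*\dashv\rho^!$ between $(\Sp_{/d^*B_\bu})_{\text{\tiny{KQ}}}$ and $(\Sp_{/BG})_{\text{\tiny{KQ}}}$. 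Finally, Theorem \ref{Borel} from \cite{DFK} supplies the Quillen equivalence $(-)\times_{BG}*\dashv (-)/G$ between $(\Sp_{/BG})_{\text{\tiny{KQ}}}$ and $(\Sp^{\B G})_{\text{\tiny{Borel}}}$.

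Setting $St:=((-)\times_{BG}*)\circ\rho_*\circ d^*$ and $Un:=d_*\circ\rho^!\circ((-)/G)$ then produces the required adjunction. Since composites of Quillen equivalences are Quillen equivalences, and each of the three component adjunctions is simplicial (slicing and left Bousfield localization of a simplicial combinatorial model category retain both properties), the composite is a simplicial Quillen equivalence of combinatorial model categories. I do not anticipate a substantive obstacle here: once the three Quillen equivalences are assembled, the result is a matter of composing adjunctions and verifying that simpliciality and combinatoriality carry through, both of which are formal. The conceptual content of the corollary is already carried by Theorem \ref{q.e.}; what remains is purely bookkeeping.
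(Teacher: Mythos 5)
Your proposal is correct and is essentially identical to the paper's own proof: the author likewise obtains the corollary by composing the Quillen equivalences of Theorem \ref{q.e.}, Proposition \ref{eDFK} and Theorem \ref{Borel}, in that order. Your additional remarks identifying $G$ explicitly and noting that simpliciality and combinatoriality are preserved under the composition are sound but not spelled out in the paper.
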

\begin{proof}
One simply compose the Quillen equivalences of Theorem \ref{Borel}, Proposition \ref{eDFK} and Theorem \ref{q.e.} as follows:
$$\xymatrix{ (\sS_{/B_\bu})_{\text{\tiny{SegAc}}}\ar[r]<1ex>^{d^*} & (\Sp_{/d^*B_\bu})_{\text{\tiny{KQ}}}\ar[r]<1ex>^{\rho_*}\ar[l]<1ex>_(0.48){\perp}^{d_*} & (\Sp_{/BG})_{\text{\tiny{KQ}}}\ar[l]<1ex>_(0.45){\perp}^{\rho^!}\ar[r]<1ex>^{(-)\times_{BG}{*}} & (\Sp^{\B G})_{\text{\tiny{Borel}}}\ar[l]<1ex>_(0.48){\perp}^{(-)/G}.}$$
\end{proof}

\section{Invariance properties of Segal group actions}

In algebraic topology one often applies constructions to spaces with a group action.
Of course, for a $G$-space $X$, and an endofunctor of spaces $L:\Sp\lrar \Sp$ there is no canonical group action on $LX$. However, many functors under consideration admit additional properties such as the following.  

\begin{defn}
An endofunctor $L:\Sp\lrar \Sp$ is said to be \textbf{weakly monoidal} if:
\begin{enumerate}
\item $L({*})\sim *$;
\item $L$ preserves weak equivalences; and
\item for any $X,Y\in \Sp$ the map $L(X\times Y)\overset{\sim}{\lrar} LX\times LY$ is a weak equivalence.   
\end{enumerate}
\end{defn}

\begin{exam}
For convenience, we mention a few common weakly monoidal endofunctors of spaces:
\begin{enumerate}
\item
Any (homotopy) (co)localization functor in the sense of \cite{Far}. 
These include the $n$-th Postnikov piece and its dual, sometimes called the $n$-th Whitehead piece.
\item 
The $p$-completion functor $\left(\mathbb{Z}/p\right)_\infty$ \'{a} la Bousfield-Kan.
\item
The (derived) mapping space functor from a fixed space $map_{\Sp}^h(A,-)$.
\end{enumerate}
\end{exam}

Any weak monoidal endofunctor $L:\Sp\lrar \Sp$ takes $\infty$-groups to $\infty$-groups. This is so since for any $\infty$-group $\G$ we can construct a Segal group $B_\bu$ for $\G$ (i.e. with an equivalence $B_1\overset{\sim}{\lrar} \G$ as $\infty$-groups). Applying $L$ on each simplicial degree, we see that $LB_\bu$ becomes a Segal group for $L\G$ so that the latter is again an $\infty$-group. The same argument implies:
\begin{obs}\label{invariance}
If $A_\bu\lrar B_\bu$ is a Segal group action and $L:\Sp\lrar \Sp$ is a weak monoidal endofunctor of spaces, then (the Reedy fibrant replacement of) $LA_\bu\lrar LB_\bu$ is a Segal group action.
\end{obs}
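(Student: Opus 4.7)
The plan is to verify the two conditions of Definition~\ref{rel} for a Reedy fibrant replacement of $LA_\bu\lrar LB_\bu$. The fibration requirement is handled for free by the fibrant replacement, and the remaining conditions are homotopy invariant, so it suffices to check them at the pointwise level. The central observation to exploit is that since $B_0\simeq *$ and $L$ preserves contractible objects, every homotopy pullback over $B_0$ (and hence over $LB_0$) collapses to a plain product, where weak monoidality of $L$ applies.

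First I would check that $LB_\bu$ is a Segal group. Weak monoidality gives $LB_0\simeq L(*)\simeq *$, so $LB_\bu$ is reduced up to weak equivalence. The Segal condition for $B_\bu$, combined with $B_0\simeq *$, yields weak equivalences $B_n\overset{\simeq}{\lrar} B_1^n$ (the iterated homotopy pullback over a contractible base is a product). Applying $L$ degreewise and using preservation of finite products up to weak equivalence gives $LB_n\simeq L(B_1^n)\simeq (LB_1)^n$, and since $LB_0\simeq *$ this is (up to weak equivalence) the Segal map for $LB_\bu$. The group-like condition is verified analogously by applying $L$ to the equivalence $B_2\overset{\simeq}{\lrar} B_1\times_{B_0}^h B_1\simeq B_1\times B_1$.

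Next, for condition (2) of Definition~\ref{rel}, Reedy fibrancy of $A_\bu\lrar B_\bu$ ensures that $A_0\times_{B_0}B_n$ computes the homotopy pullback; since $B_0\simeq *$, this is weakly equivalent to $A_0\times B_n$. Thus $A_n\overset{\simeq}{\lrar} A_0\times B_n$, so applying $L$ and using weak monoidality degreewise yields
$$LA_n\overset{\simeq}{\lrar} L(A_0\times B_n)\overset{\simeq}{\lrar} LA_0\times LB_n.$$
Since $LB_0\simeq *$, this product is in turn weakly equivalent to the homotopy pullback $LA_0\times_{LB_0}^h LB_n$ computed by any Reedy fibrant replacement, giving the required Segal-action condition.

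The main obstacle is that $L$ is \emph{not} assumed to preserve arbitrary homotopy pullbacks, only finite products up to equivalence. The argument works precisely because $B_\bu$ is reduced: $B_0\simeq *$ forces every relevant homotopy pullback over $B_0$ to degenerate into a product, where the weak monoidality axioms suffice. A secondary bookkeeping point is that the Reedy fibrant replacement of $LA_\bu\lrar LB_\bu$ may alter pointwise values only up to weak equivalence, which is harmless since the conditions in Definition~\ref{rel} (after Proposition~\ref{p:drop d_0}) are homotopy invariant in each simplicial degree.
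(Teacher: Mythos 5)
Your proposal is correct and follows essentially the same route as the paper, which justifies the observation by noting that reducedness ($B_0\simeq *$) collapses all the relevant homotopy pullbacks to finite products, where the weak monoidality axioms apply degreewise; the paper merely leaves these checks implicit ("the same argument implies"), whereas you spell them out, including the harmlessness of the Reedy fibrant replacement.
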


Now let $X\in\Sp^{\B G}$ and denote by  $$A_\bullet(X,G)\lrar B_\bullet(G)$$ the Segal group action $\mathbb{R}Un(X)$ obtained from applying the total right derived functor of~\ref{crux} on $X$. The notation is meant to suggest the equivalent map of simplicial spaces $$Bar_{\bu}(X,G)\lrar Bar_\bu (G).$$ For $L:\Sp\lrar \Sp$ weakly monoidal, denote by $B^L_\bullet(G)$ the Reedy fibrant replacement of $LB_\bullet(G)$ which is a Segal group. Similarly, denote by $$A^L_\bullet(X,G)\lrar B^L_\bullet(G)$$ the Segal group action obtained from replacing $$LA_\bullet(X,G)\lrar B^L_\bullet(G)$$ by a Reedy fibration. This is a fibrant-cofibrant object of $\sS_{/B^L_\bullet(G)}$ so that we can apply $St$ of Corollary~\ref{crux} to obtain a space $\bar{LX}\in\Sp^{\B \bar{LG}}$ where $\bar{LG}$ is the simplicial group obtained from applying the Kan loop group functor $\GG$ on the connected Kan complex $d^*B^L_\bullet(G)$. 
Note that we have a weak equivalence 
\begin{equation}\label{e: rigidification}
LG\overset{\simeq}{\lrar} B^L_1(G)\overset{\simeq}{\lrar}\Om d^*(B^L_\bullet(G))\overset{\simeq}{\lrar} \GG d^*(B^L_\bullet(G))=:\bar{LG}
\end{equation}
where the first map is obtained from the Reedy fibrant replacement, the second is the map of \S~\ref{s: preliminaries}.~\ref{i: Segal} and the third is the "rigidification map" from~\S\ref{s: preliminaries}.~\ref{i: rigidification}. Moreover, the space $\bar{LX}$ is canonically equivalent to $A^L_0(X,G)$ which in turn is equivalent to $LX$. This should be viewed as endowing $LX$ with a coherent action of the $\infty$-group $LG$.    

\begin{exam}\label{e: Postnikov}
Take $L=P_n$, the Postnikov $n$-th piece functor, modeled by \\$cosk_{n+1}(Ex^\infty(-))$. For $X\in \Sp^{\B G}$ we get an action of the simplicial group $\bar{P_nG}$ on $\bar{P_nX}$. 
\end{exam} 

\subsection{Towards an equivariant Postnikov tower for group actions}
A natural question arising from our previous considerations is whether it is possible to extend Example~\ref{e: Postnikov} to obtain an "equivariant Postnikov tower" for any group action.
More specifically, denote $\Gamma_n:=\bar{P_nG}$ so that $\bar{P_nX}$ becomes a $\Gamma_n$-space. When we let $n$ vary, the maps $\Gamma_n\lrar \Gamma_{n-1}$ arising from $P_nG\lrar P_{n-1}G$ are group maps, and we wish to obtain maps $$p_n:\bar{P_nX}\lrar \bar{P_{n-1}X}$$ and $$\tau_n:\bar{X}\lrar \bar{P_{n}X},$$ arising from $P_nX\lrar P_{n-1}X$ and $X\lrar P_nX$ which are $\Gamma_n-\Gamma_{n-1}$-equivariant (here $\bar{X}$ arises from $L=Ex^\infty$), thus giving rise to a tower 
 
\begin{equation}\label{tower}
\xymatrix{ & \overset{\vdots}{\bar{P_{n}X}}\ar[d]^{p_n}\\ & \vdots\ar[d]\\& \bar{P_{2}X}\ar[d]^{p_2}\\& \bar{P_{1}X}\ar[d]^{p_1}
\\\bar{X}\ar[r]^{\tau_0}\ar[ur]^(0.65){\tau_1}\ar[uur]^(0.65){\tau_2}\ar[uuuur]^(0.65){\tau_n} & \bar{P_{0}X}.}
\end{equation}

In order to obtain such a tower, one needs to show that the straightening constructions done in this paper are functorial in an appropriate sense. However the mere existence of the tower~\ref{tower} is not satisfactory, and one would like to know that it converges to the original $G$-space $X$. Moreover, if we instead start with a Segal group action, it is desirable to obtain a similar tower of Segal group actions and to show that these two towers are equivalent in the appropriate sense. 
The difficulty in answering such a question is that this tower is not a diagram in any category of $G$-spaces with a fixed group $G$. Rather, it is a diagram in the \textbf{Grothendieck construction} of the functor $$\Sp^{\B(-)}:sGp\lrar \AdjCat$$ (where $\AdjCat$ stands for categories and adjunctions) which associates to every simplicial group $G$ the category $\Sp^{\B G}$ of $G$-spaces and to a simplicial group map the extension-restriction adjunction. One is then lead to consider the homotopy theory of the\\ Grothendieck construction $$\int_{G\in sGp}\Sp^{\B G}$$ which takes into account the homotopy theory of the base $sGp$, and of each of the fibers $\Sp^{\B G}$. It is convenient to have a model structure that presents the homotopy theory at hand but it is not clear a-priori that such a model structure exists. 

With these questions in mind, the author and Yonatan Harpaz developed in~\cite{HP1} general machinery that, in particular, enables one to obtain a model structure on $\int_{G\in sGp}\Sp^{\B G}$ from the model structure on the base and on each of the fibers. This was further developed in~\cite{HP2}
where the authors showed that an analogous "global" model structure can be constructed for Segal group actions and that the two model structures are Quillen equivalent, thus extending the Quillen equivalence of~\ref{crux} to this case. The main result is then that applying $P_n$ to each simplicial degree as in Example~\ref{e: Postnikov} gives an $n$-truncation functor in the integral model structure for Segal group actions. It follows \cite[\S 5.1]{HP2} that the tower~\ref{tower} of group actions converges to the the initial group action.   

\begin{ackname}
I would like to thank Thomas Nikolaus for a useful discussion (see Proposition~\ref{p:drop d_0}) and to the referee for many valuable remarks. The author was supported by the Dutch Science Foundation (NWO), grant number 62001604.  
\end{ackname}

\end{document}